\documentclass[11pt,a4paper]{article}

\usepackage{amsmath,amssymb,amsthm,amsfonts,latexsym,graphicx,subfigure}
\usepackage[all,import]{xy}
\usepackage[numbers,sort&compress]{natbib}
\usepackage{indentfirst}
\usepackage{fancyhdr}
\usepackage{bbm}
\usepackage{hyperref}
\usepackage{enumerate,tikz}
\usepackage{accents,cases}
\usepackage{multirow}
\usepackage[lined, ruled, linesnumbered, longend]{algorithm2e}

\usepackage{array}
\newcommand{\PreserveBackslash}[1]{\let\temp=\\#1\let\\=\temp}
\newcolumntype{C}[1]{>{\PreserveBackslash\centering}p{#1}}
\newcolumntype{R}[1]{>{\PreserveBackslash\raggedleft}p{#1}}
\newcolumntype{L}[1]{>{\PreserveBackslash\raggedright}p{#1}}

\setlength{\textheight}{254mm} \setlength{\textwidth}{168mm}
\setlength{\oddsidemargin}{-5mm} \setlength{\evensidemargin}{0mm}
\setlength{\topmargin}{-20mm}

\DeclareMathOperator*{\Sgn}{\ensuremath{Sgn}}

\DeclareMathOperator*{\sgn}{\ensuremath{sgn}}

\DeclareMathOperator*{\vol}{\ensuremath{vol}}
 
 \newcommand{\Span}{\mathrm{span}}
 \newcommand{\spec}{\mathrm{spec}}
 \newcommand{\Spec}{\textbf{Spec}}

\makeatletter
\def\wbar{\accentset{{\cc@style\underline{\mskip8mu}}}}

\makeatother

\renewcommand{\vec}[1]{\mbox{\boldmath \small $#1$}}
%\renewcommand{\Vec}[1]{\mbox{\boldmath \small $#1$}}

 %added by xj
 %added by xj
%\renewcommand{\vec}[1]{\mbox{\boldmath$#1$}}

% added by xj

\numberwithin{equation}{section}

\theoremstyle{plain}
\newtheorem{theorem}{Theorem}

\newtheorem{defn}{Definition}

\newtheorem{remark}{Remark}

\newtheorem{cor}{Corollary}
\newtheorem{pro}{Proposition}
\newtheorem{example}{Example}

\begin{document}
\bibliographystyle{unsrt}
\title{Spectrum of signless 1-Laplacian on simplicial complexes}
\author{Xin Luo\footnotemark[1]~\footnotemark[2],
\and Dong Zhang\footnotemark[3]}
\renewcommand{\thefootnote}{\fnsymbol{footnote}}
\footnotetext[1]{College of Mathematics and Economics, Hunan University, Changsha 410082, P.R. China.

Email address: {\tt xinlnew@163.com} (Xin Luo).
}

\footnotetext[2]{Current Address: Academy of Mathematics and Systems Science, Chinese Academy of Sciences, Beijing 100190, P.R. China.
}

\footnotetext[3]{LMAM and School of Mathematical Sciences, Peking University, Beijing 100871, P.R. China.\\
Email addresses:
{\tt dongzhangmath@gmail.com} \; and \; {\tt 13699289001@163.com} (Dong Zhang).
}
\date{}
\maketitle

\begin{abstract}

We introduce the signless 1-Laplacians and the dual Cheeger constants on simplicial complexes.   %first develop a general framework for signless 1-Laplacian defined in terms of the combinatorial structure of a simplicial complex.
%The structure of the eigenvectors and the feature of complex implied in the eigenvalues are shown.
The connection of its spectrum to the combinatorial properties like independence number,  chromatic number and dual Cheeger constant is investigated. Our estimates  can be comparable to  Hoffman's bounds in virtue of Laplacian on simplicial complexes. An interesting inequality relating multiplicity of the largest eigenvalue, independence number and chromatic number are provided, which could be regarded as a variant version of Lovasz sandwich theorem. Also, the behavior of the operator under the topological operations of wedge and duplication of motifs is studied.
The Courant nodal domain theorem in spectral theory is extended to the case of signless 1-Laplacian on complexes.  %We also study the effects of a wedge sum and a duplication of a motif on the spectrum of the signless 1-Laplacian, and identify some of the combinatorial features of a simplicial complex that are encoded in its spectrum.
%The most important discovery is the relationship  between the spectrum of signless 1-Laplacian and combinatorial data of complexes, in which several estimates on spectral gap and multiplicity of eigenvalues are established by virtue of independence number and chromatic number.
\vspace{0.3cm}

\noindent\textbf{Keywords:}
signless 1-Laplacian;
simplicial complex;
nodal domain;
spectral theory;
independence number;
%clique covering number;
chromatic number;
dual Cheeger constant;
Hoffman's bound
\end{abstract}

\tableofcontents

\section{Introduction}
%In summary, we introduce a new notion of a Laplace operator for simplicial complexes, namely, a signless 1-Laplacian.

%The spectral theory of simplicial complexes and hypergraphs has been intensively developing the past decade. It exploits various operators, such as Laplacians, e.g. in \cite{BGP19,Golubev,HorakJost2013,PR17},
%It also found interesting applications, as for example, in property testing \cite{KL14}. Some of the results are surveyed in \cite{Lubotzky14,Lubotzky17,LP20}. There is a number of open questions in the theory, and introducing a new operator can turn out to be a tipping point for some of them.

%We should explain the phrase `signless 1-Laplacian' and in the title and the motivation of this paper.
It is widely known that the discrete Laplacian on graphs has been studied for a long history, %by many researchers
and B. Eckmann has generalized it to simplicial complexes which encodes the topological and combinatorial  data \cite{eck}.
Some recent remarkable results on discrete Laplacian for simplicial complex involve lots of algebraic and geometric aspects for complex \cite{HorakJost2012,HorakJost2013,BGP19,Golubev,PR17}, which are applied in the study of expansion, mixing, colorings, random walks, geometric and topological overlap. Other relevant operators like adjacency operator and Hecke operator (see \cite{EGL15}), as well as signless Laplacians \cite{KO17}, are systematically studied with many interesting applications \cite{KL14,Lubotzky14,Lubotzky17,LP20}.

On the other hand, the (signless) 1-Laplacian on graph has been also investigated
systematically, see \cite{HeinBuhler2010,Chang2015,ChangShaoZhang2015,ChangShaoZhang2015-nodal,ChangShaoZhang2016}.
For (signless) 1-Laplacian, the (dual) Cheeger inequality turns out to be an equality which is different from the discrete Laplacian case. Besides, the multiplicity of the maximum eigenvalue possesses strong relation with the independence number and chromatic number \cite{Zhang2017}. Due to these exciting results,
%like the (dual) Cheeger-type equalities, the Courant nodal domain theorems and the multiplicity of the special eigenvalues equipped the interaction with independence number and chromatic number,
graph 1-Laplacian seems to perform better on discrete and combinatorial properties of graphs  than graph Laplacian. In a sense, the (signless) 1-Laplacian provides a {\sl zero-homogenous spectral theory} on graph, with which some advantages on graphic features are founded. %for (dual) Cheeger problem on graph.

To some extent, the previous explorations indicate that finding a zero-homogenous spectral theory for a combinatorial object may bring new discoveries and better discrete features -- this is the big picture behind our researches. Our motivation is to realize a zero-homogenous spectral theory for simplicial complex, which is a preliminary attempt under the big picture. As a good starting point, signless 1-Laplacian seems to be a suitable and simple choice along this direction -- because we don't need to consider the routine orientations of a simplical complex when we use signless 1-Laplacian rather than  1-Laplacian.
 %Their starting point is the Cheeger inequality and dual version, and they find that performs well on discrete and combinatorial properties of graphs .  %This paper devotes to the study of signless 1-Laplacian on complex.

 In present paper, we will mainly focus on bridging the spectrum of signless 1-Laplacian and
 some combinatorial data of complexes including the independence number, chromatic number and clique covering number, etc. The organization of this paper is as follows. Some basic definitions and a preliminary result involving the spectrum of signless 1-Laplacian on complex is shown in the following subsections. The Courant nodal domain theorem (see Theorem \ref{th:nodal-domain}) will be investigated in Section \ref{section:Courant}.
One of the main parts is Section \ref{section:parameter} where one can find strong and close relations between the spectrum of signless 1-Laplacian and some combinatorial parameters of complex (see Theorems \ref{thm:Hoffman}, \ref{th:color} and \ref{th:independent-clique}). Finally, we study the spectrum for 1-Laplacian on complexes constructed via wedges and duplication of motifs in Section \ref{section:construction}.

We highlight here that both Theorem \ref{thm:Hoffman} and Theorem \ref{th:color} provide comparable bounds with Hoffman's bounds on independence number and chromatic number of a simplicial complex \cite{BGP19,Golubev,Hoffman70,Hoffman03,Nikiforov19}. %and  some generalizations to simplicial complexes and hypergraphs could be found in .
Meanwhile, Theorem \ref{th:color} bridge the 1-Laplacian spectrum, chromatic number and the dual Cheeger constant. And interestingly, Theorem \ref{th:independent-clique} shows a variant analog of the Lovasz Sandwich theorem $\alpha\le\Theta\le\theta\le \kappa$ with $\Theta$ the Shannon capacity and $\theta$ the Lovasz theta number \cite{BGP19}.
\subsection{Preliminary and definition}
An abstract simplicial complex $K=(V,S)$ on a finite vertex set $V$ is a collection of subsets of $V$ that  is closed under inclusion, i.e. $F' \subset F \in S\Rightarrow F' \in S$. A $d$-face is an element of cardinality $(d+1)$ in $S$. The collection of all $d$-faces of the simplicial complex $K$ is denoted by $S_d(K)$ and the number of all $d$-faces is denoted by $\#S_d(K)$ or $|S_d(K)|$.\footnote{ Sometimes, we will also use $|\cdot|$ to denote the absolute value. While the readers could easily distinguish the meanings. } Assume that $\bar F=\{v_0,v_1,\ldots,v_{d+1}\}$ is a $(d+1)$-face of a complex $K$, then
$F_i=\{v_0,\ldots,\hat{v}_i,\ldots,v_{d+1}\}$
is a $d$-face of $\bar F$ which can be denoted by $F_i\prec \bar F$. The boundary set $\partial \bar F $ can be divided into $(d+2)$ boundary faces, $\{F_i\}^{i=d+1}_{i=0}$. For two $d$-faces $F_i$ and $F_j$ sharing a $(d-1)$-face, we say they are $(d-1)$-down neighbours
and denote by $F_i\mathop{\sim}\limits^{\text{down}} F_j$. Similarly, for two $d$-faces $F_i$ and $F_j$ which are boundaries of  a $(d+1)$-simplex, we use the term of
$(d+1)$-up neighbours and write as
$F_i\mathop{\sim}\limits^{\text{up}} F_j$.

%For simplicity, we give an order on the vertices. Assume \#$V=m$ and $v_0< v_1< \cdots< v_{n+1}\cdots <v_{m-1},$
%Assume that $\bar F=\{v_0,v_1,\ldots,v_{d+1}\}$ be a $(d+1)$-face of a complex $K=(V,S),$ then
%$F_i=\{v_0,\ldots,\hat{v}_i,\ldots,v_{d+1}\}$
%is a $d$-face of $\bar F$ which can be denoted by $F_i\prec \bar F$. The boundary set $\partial \bar F $ can be divided into $(d+2)$ boundary faces, $\{F_i\}^{i=d+1}_{i=0}$. %For every $(n+1)$-dimensional simplicial face, there are $(n+2)$ boundaries and the boundary set, denoted by , is $\{F_i\}^{i=n+1}_{i=0}$.
%There is also an order for all $n$-faces that is defined by lexicographical order.
According to the combinatorial structure related to up or down adjacency, we define
signless up  1-Laplacian and signless down 1-Laplacian respectively.
%Let $B_n^{up}=(b_{\bar F_i F_j}^{up})_{p\times q}$, where
%$$b_{\bar F_i F_j}^{up}=
%\begin{cases}
%1,&\text{ if }F_j\in \partial \bar F_i \\
%0,&\text{ if }F_j\not\in \partial \bar F_i.
%\end{cases}$$
%corresponding to simplicial complex with $p$  $(n+1)$-dim simplicial faces and $q$  $n$-dim simplicial faces.
%For simplicity, we will omit the word `signless' from now on.
 \textbf{Henceforth, we omit the word `signless' for simplicity, that is, we will use (up and down) 1-Laplacian instead of signless 1-Laplacian}.

Let $p=\# S_{d+1}(K)$, $q=\# S_{d}(K)$ and $l=\# S_{d-1}(K)$. The incidence matrix corresponding to $S_{d+1}(K)$ and $S_d(K)$ is defined by $B_d^{up}=(b_{\bar F_i F_j}^{up})_{p\times q}$, where
$$b_{\bar F_i F_j}^{up}=
\begin{cases}
1,&\text{ if }F_j\in \partial \bar F_i, \\
0,&\text{ if }F_j\not\in \partial \bar F_i.
\end{cases}$$
Similarly, the incidence matrix corresponding to $S_d(K)$ and $S_{d-1}(K)$ is $B_d^{down}=(b_{  E_jF_i}^{down})_{l\times q}$, where
$$b_{ E_jF_i}^{down}=
\begin{cases}
1,&\text{ if }E_j\in \partial  F_i, \\
0,&\text{ if }E_j\not\in \partial  F_i.
\end{cases}$$

For any $f=(f_1,\cdots,f_q)\in\mathbb{R}^q$, define the set valued mapping $\Sgn: \mathbb{R}^q\rightarrow 2^{\mathbb{R}^q}$ by
$$\Sgn(f)=\left\{(v_1,\cdots,v_q)^T\in\mathbb{R}^q:v_i\in \Sgn(f_i),\,i=1,\cdots,q\right\}$$
i.e.,
$$
\Sgn(f)=\Sgn(f_1)\times\Sgn(f_2)\times\cdots\times\Sgn(f_q)
$$
 where $2^{\mathbb{R}^q}$ is the power set of $\mathbb{R}^q$ and
$$
\Sgn (t)=\begin{cases}
\{1\}, &\;\text{if }t>0,\\
\{-1\},&\;\text{if }t<0,\\
[-1,1],&\;\text{if }t=0.
\end{cases}
$$

Then define the up and down 1-Laplacian respectively as follows:
\begin{align*}
\Delta_{1,d}^{up} f&=(B_d^{up})^{\top}\Sgn(B_d^{up}f), \\
\Delta_{1,d}^{down} f&=(B_d^{down})^{\top}\Sgn(B_d^{down}f),
\end{align*}
where $f=(f_1,f_2,\cdots,f_q)$ is a real $q$-dimensional vector with components  corresponding to the function values on every $d$-face, i.e., $f_i:=f(F_i)$. And $B^T\Sgn(Bf)$ is defined as $\{B^Tg:g\in \Sgn(Bf)\}$ in which $B=B_d^{up}$ or $B_d^{down}$.  Hereafter, for convenience, with some abuse of notation, $f_i$ also represents the $d$-face %$n$-dimensional simplicial face
with subscript $i$.

\begin{remark}
If we remove the `$\Sgn$' in the definition of (signless) 1-Laplacians $\Delta_{1,d}^{up}$ and $\Delta_{1,d}^{down}$, then we can get the signless Laplacian defined on a simplicial complex \cite{KO17}. This can be verified in the following way:  note that $(B_d^{up})^{\top}B_d^{up}f=(D_d^{up}+A_d^{up})f$, where $D_d^{up}=\mathrm{diag}(\deg_1^{up},\ldots,\deg_q^{up})$ with $\deg_i^{up}$ is the number of $(d+1)$-faces containing $f_i$ in boundary, and $A_d^{up}=(a_{jj'})_{q\times q}$ is the adjacent matrix defined as
$$a_{jj'}=\begin{cases}
1,&\text{ if } F_j\cup F_{j'}\in S_{d+1}(K),\\
0,&\text{ otherwise}.
\end{cases}$$
If we adopt the more general form
$$b_{\bar F_i F_j}^{up}=
\begin{cases}
\sqrt{w_i},&\text{ if }F_j\in \partial \bar F_i, \\
0,&\text{ if }F_j\not\in \partial \bar F_i.
\end{cases}$$
where $w_i$ is a nonnegative weight on $\bar F_i\in S_{d+1}(K)$, %Then the signless Laplacian defined on $S_d(K)$ of a pure simplicial complex $K$ is actually the matrix \cite{KO17}
then by standard normalization, we arrive at the (normalized) signless Laplacian
%$\frac{1}{d+2}(D_d^{up})^{-1}(B_d^{up})^{\top}B_d^{up}=\frac{1}{d+2}(I+(D_d^{up})^{-1}A_d^{up})$.
$$\frac{1}{d+2}(D_d^{up})^{-1/2}(B_d^{up})^{\top}B_d^{up}(D_d^{up})^{-1/2}
=\frac{1}{d+2}(I+(D_d^{up})^{-1/2}A_d^{up}(D_d^{up})^{-1/2}),$$
which is called the upper random walk on $d$-simplices defined by a transition probability matrix (Definition 3.1 in \cite{KO17}). For the lower case in \cite{KO17}, we have a similar discussion as above.
\end{remark}

\begin{defn}[eigenvalue problem for up 1-Laplacian]
A pair $(\mu, f)\in \mathbb{R}\times (\mathbb{R}^q\setminus\{0\})$ is called an {\sl eigenpair} of the up 1-Laplacian $\Delta_{1,d}^{up},$ if
\begin{equation}\label{eq:up-1Lap-eigen}
 0\in \Delta_{1,d}^{up}  f - \mu D^{up}\Sgn(f)\;\;(\text{or }\mu D^{up}\Sgn(f)\bigcap \Delta_{1,d}^{up}  f\neq \varnothing),
\end{equation}
where $D^{up}=\mathrm{diag}(\deg_1^{up},\ldots,\deg_q^{up})$ and $\deg_i^{up}$ is the number of $(d+1)$-faces
 containing $f_i$ in boundary. It should be noted that $\Delta_{1,d}^{up}  f - \mu D^{up}\Sgn(f)$ means the Minkowski summation of the vector sets $\Delta_{1,d}^{up}  f$ and $- \mu D^{up}\Sgn(f)$.

A pair $(\mu, f)\in \mathbb{R}\times (\mathbb{R}^q\setminus\{0\})$ is called an {\sl unnormalized eigenpair} of the up 1-Laplacian  $\Delta_{1,d}^{up},$ if
\begin{equation}\label{eq:up-1Lap-eigen-unnormal}
 0\in \Delta_{1,d}^{up}  f - \mu \Sgn(f)\;\;(\text{or }\mu \Sgn(f)\bigcap \Delta_{1,d}^{up}  f\neq \varnothing).
\end{equation}
\end{defn}

\begin{defn}[eigenvalue problem for down 1-Laplacian]
A pair $(\mu,  f)\in \mathbb{R}\times (\mathbb{R}^q\setminus\{0\})$ is called an  {\sl eigenpair} of the down 1-Laplacian $\Delta_{1,d}^{down},$ if
\begin{equation} \label{eq:down-1Lap-eigen}
 0\in \Delta_{1,d}^{down}  f - \mu D^{down}\Sgn(f)\;\;(\text{or }\mu D^{down}\Sgn(f)\bigcap \Delta_{1,d}^{down}  f\neq \varnothing),
\end{equation}
where $D^{down}=\mathrm{diag}(\deg_1^{down},\ldots,\deg_q^{down})$ and $\deg_i^{down}=(d+1)$, i.e., the number of $(d-1)$-faces of $f_i$.

Similarly, a pair $(\mu,f)\in \mathbb{R}\times (\mathbb{R}^q\setminus\{0\})$ is called an {\sl unnormalized eigenpair} of the down 1-Laplacian $\Delta_{1,d}^{down},$ if
\begin{equation} \label{eq:down-1Lap-eigen-unnormal}
 0\in \Delta_{1,d}^{down}  f - \mu\Sgn(f)\;\;(\text{or }\mu\Sgn(f)\bigcap \Delta_{1,d}^{down}  f\neq \varnothing).
\end{equation}
\end{defn}

%The coordinate form reads as
Direct calculation shows
$$
(\Delta_{1,d}^{up} f)_i=\left\{\left.\sum_{j_1,\ldots,j_{d+1}} z_{ij_1\cdots j_{d+1}}\right|z_{ij_1\cdots j_{d+1}}\in \Sgn(f_i+f_{j_1}+\cdots+f_{j_{d+1}})\right\},
$$
where the summation $\sum_{j_1,\ldots,j_{d+1}} z_{ij_1\cdots j_{d+1}}$ is taken over all $d$-faces that
$f_{j_1},\cdots,f_{j_{d+1}}$ and $f_i$ are in a same $(d+1)$-face. Moreover, $z_{ij_1\cdots j_{d+1}}$ is symmetric on its indices. In coordinate form, the eigenvalue problem \eqref{eq:up-1Lap-eigen} for up 1-Laplacian is to solve $\mu\in\mathbb{R}$ and $f\in \mathbb{R}^q\setminus\{0\}$ such that there exists $z_{ij_1\cdots j_{d+1}}$ satisfying
\begin{equation}\label{eq:eigen-problem-up1-Laplacian}
\sum_{j_1,\ldots,j_{d+1}} z_{ij_1\cdots j_{d+1}}\in \mu \deg_i^{up} \Sgn(f_i),\; i=1,2,\ldots,q.
\end{equation}

Similarly, the coordinate form of down 1-Laplacian reads as
$$
(\Delta_{1,d}^{down} f)_i=\left\{\left.\sum_{i_1,\cdots,i_m} z_{ii_1\cdots i_m}\right|z_{ii_1\cdots i_m}\in \Sgn(f_i+f_{i_1}+...+f_{i_m})\right\}
$$
where the summation $\sum_{i_1,\cdots,i_m}z_{ii_1\cdots i_m}$ is taken over all $d$-faces that $f_{i_1},\ldots,f_{i_m}$ and $f_i$ sharing a same  $(d-1)$-face. Moreover, $z_{ii_1\cdots i_m}$ is symmetric on its indices. The coordinate form of eigenvalue problem for down 1-Laplacian \eqref{eq:down-1Lap-eigen} is:
\begin{equation}\label{eq:eigen-problem-down1-Laplacian}
\sum_{i_1,\cdots,i_m}z_{ii_1\cdots i_m}\in \mu (d+1) \Sgn(f_i),\; i=1,2,\ldots,q.
\end{equation}

%Similar to the graph 1-Laplacian, we can define the eigenvalue problem for 1-Laplacian on simplicial complex:
%$$\sum_{j_1,\ldots,j_{n+1}} z_{ij_1\cdots j_{n+1}}+\sum_{j=1}^{n+1} z_{ij}\in \mu d_i \Sgn(x_i), i=1,2,\ldots,q,$$
%where $d_i$ is the degree of the $i$-th $n$-dim simplicial face
%and if $d_i$ is equal to
%the number of $(n+1)$-dim simplicial faces
%that contain this $n$-dim simplicial face in boundary, the corresponding signless
%1-Laplacian is called normalized signless 1-Laplacian. Otherwise, we call it
%nonnormalized signless 1-Laplacian.
%The energy function of $\Delta_{1,n}$ is
%$$
%I(x)=\sum_{\bar F_j \in S_{n+1}(K)} |F_{j_1}+\cdots+F_{j_{n+2}}|+ \sum_{E_j \in S_{n-1}(K)} |\bar E_{j_1}+\cdots+\bar E_{j_m}|
%$$
%where
%$\{F_{j_i}\}^{i=n+2}_{i=1}$ are the $n$-dim simplicial faces of $\bar F_{j}$ and $\{\bar E_{j_i}\}^{i=m}_{i=1}$ are the $n$-dim simplicial complexes that containing $E_{j}$ as its $(n-1)$-dim boundary.

From the variational point of view,  $\Delta_{1,d}^{up}$ and
$\Delta_{1,d}^{down}$ are respectively the subdifferential of the following convex functions
$$
I^{up}(f)=\sum_{\bar F \in S_{d+1}(K)} |\sum_{F\prec \bar F} f_F|  =\sum |f_{j_1}+\cdots+f_{j_{d+2}}|
$$
and
$$
I^{down}(f)=\sum_{E \in S_{d-1}(K)} |\sum_{F\succ E} f_F| =
\sum| f_{j_1}+\cdots+ f_{j_m}|,
$$
i.e., $\Delta_{1,d}^{up}f=\partial I^{up}(f)$ and $\Delta_{1,d}^{down} f=\partial I^{down}(f)$. Indeed, the eigenvalue problem \eqref{eq:up-1Lap-eigen} for $\Delta_{1,d}^{up}$ (resp. \eqref{eq:down-1Lap-eigen} for
$\Delta_{1,d}^{down}$) could be derived from the variational problem of $I^{up}(\cdot)$ (resp.  $I^{down}(\cdot)$) on the piecewise linear manifold determined by $\|f\|^{up}=1$  (resp. $\|f\|^{down}=1$), where $\|f\|^{up}=\sum_{i=1}^q \deg_i^{up}|f_i|$ (resp. $\|f\|^{down}=\sum_{i=1}^q \deg_i^{down}|f_i|$).

It is known that critical points of $I^{up}(\cdot)/\|\cdot\|^{up}$ (resp., $I^{down}(\cdot)/\|\cdot\|^{down}$) must be eigenvectors of  $\Delta_{1,d}^{up}$ (resp., $\Delta_{1,d}^{down}$), and the minimal and maximal eigenvalues of $\Delta_{1,d}^{up}$ (resp., $\Delta_{1,d}^{down}$) are respectively the minimum and maximum of $I^{up}(\cdot)/\|\cdot\|^{up}$ (resp., $I^{down}(\cdot)/\|\cdot\|^{down}$). Moreover, the number of eigenvalues of  $\Delta_{1,d}^{up}$ (resp., $\Delta_{1,d}^{down}$) is finite. The proof is based on the standard and routine calculations similar to the related results in \cite{Chang2015,ChangShaoZhang2015,ChangShaoZhang2016} and thus we omit it.

\subsection{A glimpse of the spectrum of signless 1-Laplacian on complexes}

For a graph, we know that $0$ is an eigenvalue of the signless 1-Laplacian if and only if the graph has a bipartite component. As a related analog on simplicial complex, we may first look at the following proposition:

\begin{pro}\label{prop:eigenvalue0} $\Delta_{1,d}^{up}$ has eigenvalue $0$ if one of the following conditions holds.

\begin{enumerate}
\item[(1)]  $\# S_d(K)> \# S_{d+1}(K)$; %then $0$ is an eigenvalue of $\Delta_{1,d}^{up}$. %the signless up 1-Laplacian for $d$-faces.

\item[(2)]  the up-degree of each $d$-face is less than or equal to $(d+2)$ and there is at least one $d$-face with the degree less than $(d+2)$; %then spectrum of $\Delta_{1,d}^{up}$ contains $0$.

\item[(3)]  $S_d(K)$ is $(d+2)$ colorable, i.e., the colors of faces of each $(d+1)$-dim simplex are pairwise different.
\end{enumerate}
\end{pro}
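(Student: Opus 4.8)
The plan is to reduce the claim ``$0$ is an eigenvalue of $\Delta_{1,d}^{up}$'' to the purely linear condition ``$\ker B_d^{up}\neq\{0\}$'', and then check each of the three hypotheses against it. Write $p=\#S_{d+1}(K)$ and $q=\#S_d(K)$, so that $B_d^{up}$ is a $p\times q$ incidence matrix.

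First I would note that at $\mu=0$ the normalization term in \eqref{eq:up-1Lap-eigen} collapses to $\{0\}$, so a nonzero $x$ gives the eigenpair $(0,x)$ exactly when $0\in\Delta_{1,d}^{up}x=\partial I^{up}(x)$. Since $I^{up}$ is convex, nonnegative, and vanishes at the origin, $0\in\partial I^{up}(x)$ holds iff $x$ globally minimizes $I^{up}$, i.e.\ iff $I^{up}(x)=0$. As $I^{up}(x)=\sum_{\bar F\in S_{d+1}(K)}\bigl|\sum_{F\prec\bar F}x_F\bigr|$ is a sum of absolute values, this is equivalent to $\sum_{F\prec\bar F}x_F=0$ for every $(d+1)$-face $\bar F$, that is, to $B_d^{up}x=0$. (Conversely one checks that any $x$ with $B_d^{up}x=0$ does satisfy $0\in(B_d^{up})^T\Sgn(0)=\Delta_{1,d}^{up}x$, by taking the selection $0$ from the cube $\Sgn(0)$.) Thus $0$ is an eigenvalue iff $B_d^{up}$ has a nontrivial kernel, and each hypothesis need only force this.

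For (1), if $q>p$ then $B_d^{up}$ has more columns than rows, so $\operatorname{rank}B_d^{up}\le p<q$ and $\ker B_d^{up}\neq\{0\}$. For (2) I would double-count the ones of $B_d^{up}$: by rows, each of the $p$ $(d+1)$-faces has exactly $d+2$ boundary $d$-faces, giving $(d+2)p$; by columns the total is $\sum_i d_i^{up}$. The hypothesis ($d_i^{up}\le d+2$ for all $i$, strict for at least one) yields $(d+2)p=\sum_i d_i^{up}\le (d+2)q-1<(d+2)q$, hence $p<q$; so (2) in fact reduces to (1). For (3) I would exhibit a kernel vector directly: a proper $(d+2)$-coloring forces the $d+2$ boundary faces of every $(d+1)$-simplex to carry all $d+2$ colors, each once; assigning a scalar $a_c$ to color $c$ and putting $x_F=a_{c(F)}$, each boundary sum collapses to $\sum_c a_c$, and choosing the $a_c$ to sum to zero nontrivially (say $a_1=1$, $a_2=-1$, the rest $0$) gives a nonzero $x\in\ker B_d^{up}$.

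The argument is elementary once the first equivalence is established, and that equivalence is the only substantive step; the two points to watch are the set-valuedness at $\mu=0$---one must verify that membership $x\in\ker B_d^{up}$ really produces $0\in\partial I^{up}(x)$ and not merely $I^{up}(x)=0$---and, in (3), that the constructed $x$ is genuinely nonzero, which holds as soon as one $(d+1)$-face is present (forcing every color, in particular colors $1$ and $2$, to occur); when no $(d+1)$-face exists the matrix $B_d^{up}$ is empty and any nonzero $x$ works.
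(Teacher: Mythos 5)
Your proposal is correct and follows essentially the same route as the paper: reduce ``$0$ is an eigenvalue'' to the existence of a nonzero solution of $I^{up}(x)=0$ (equivalently $\ker B_d^{up}\neq\{0\}$), then use the column-versus-row count for (1), the degree double-count reducing (2) to (1), and the $\pm1/0$ vector supported on two color classes for (3). The only difference is that you spell out the justification of the initial equivalence and the nonvanishing of the constructed vector in (3), both of which the paper leaves implicit.
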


\begin{proof}
(1) It is clear that $0$ is an eigenvalue if and only if $I^{up}(f)=0$ has a nonzero solution $f$, i.e., $f_{j_1}+\cdots+f_{j_{d+2}}=0$, $\forall \bar F_j \in S_{d+1}(K)$.  Since the number of coordinate components of $f$ is $\# S_d(K)$ but the number of absolute terms of $I^{up}$ is $\# S_{d+1}(K)$, and by $\# S_d(K)> \# S_{d+1}(K)$, the related linear system of equations has nonzero solution $f$. So there exists an eigenvector corresponding to $0$.

(2) Since
$$
(d+2)\#S_{d+1}(K)=\sum_{i\in S_d(K)}\deg_i^{up}
<  (d+2) \#S_d(K)
$$
implies $\# S_d(K)> \# S_{d+1}(K)$,
it follows from (1) that (2) holds.

(3) Let $c:S_d(K)\to \{1,\ldots,d+2\}$ be a coloring map such that $\{c(f_{j_1}),\ldots,c(f_{j_{d+2}})\}=\{1,\ldots,d+2\}$ whenever $j_1,\ldots,j_{d+2}$ are $d$-dimensional  faces of a $(d+1)$-dimensional simplex. Letting $$f_i=\begin{cases} 1,& \text { if } c(f_i)=1,\\ -1,& \text { if } c(f_i)=2,\\ 0,& \text{ otherwise,}
\end{cases}$$
for $i=1,\ldots,q$, we have $f_{j_1}+\cdots+f_{j_{d+2}}=0$ whenever $j_1,\ldots,j_{d+2}$ are $d$-dimensional faces of a $(d+1)$-dimensional simplex. Thus, one can take $\mu=0$ and $z_{ij_1\cdots j_{d+1}}=0$ in \eqref{eq:eigen-problem-up1-Laplacian}. Consequently, $0$ is an eigenvalue of $\Delta_{1,d}^{up}$. %$I^{up}(x)=0$. Consequently, as a minimum of $I^{up}$, $0$ must be the eigenvalue of
\end{proof}

\begin{remark}
Proposition \ref{prop:eigenvalue0} only involves the combinatorial properties of a simplicial complex, since our definition of signless 1-Laplacian doesn't involve the orientation of  a simplicial complex. For example,  %$S_d(K)$ is $(d+2)$-colorable doesn't imply it is disorientation.
let $K$ be determined by its facets $\{0,1,2,3\}$, $\{1,2,3,4\}$ and $\{0,2,3,4\}$. Then $S_2(K)$ is $4$-colorable\footnote{Indeed, all the conditions (1),(2),(3) in Proposition \ref{prop:eigenvalue0} hold.}, but $\{0,1,2,3\}$ and $\{1,2,3,4\}$  induce the different orientation on $\{1,2,3\}$.
\end{remark}

\begin{example}
We show the detailed computation of spectrum of signless 1-Laplacian on the tetrahedron.
\begin{center}
%$K$=\;\;
\begin{tikzpicture}
\draw (0,0) to (0,2);
\draw (0,0) to (2,1.8);
\draw (0,0) to (2,-0.2);
\draw (2,1.8) to (2,-0.2);
\draw (2,1.8) to (0,2);
\draw[dashed] (2,-0.2) to (0,2);
\node (K) at (-1,1) {$K=$};
\node (1) at (-0.2,0.1) {$1$};
\node (2) at (-0.2,1.9) {$2$};
\node (3) at (2.2,1.9) {$3$};
\node (4) at (2.2,0) {$4$};
\end{tikzpicture}
\end{center}

Consider the signless up 1-Laplacian for  $2$-faces $\{(123),(124),(134),(234)\}$.
The eigenvalue problem is to solve the pair $(\mu,f)\in \mathbb{R}\times (\mathbb{R}^n\setminus\{0\})$ such that
$$
\begin{cases}
\exists  z_{1234}\in \Sgn(f_{123}+f_{124}+f_{134}+f_{234}),\\
\text{ and }z_{123}\in \Sgn(f_{123}),
z_{124}\in \Sgn(f_{124}),
z_{134}\in \Sgn(f_{134}),
z_{234}\in \Sgn(f_{234}), \text{ s.t. }\\
z_{1234}=\mu z_{123}=\mu z_{124}=\mu z_{134}=\mu z_{234}.
\end{cases}
$$
We first check $\mu=0$. In this situation, $z_{1234}=0$, which implies that $f_{123}+f_{124}+f_{134}+f_{234}=0$. And $f= (1,-1,1,-1)$ is a solution.

Now we assume that $\mu\ne 0$. Since $f$ has a nonzero component, we can assume without loss of generality that $f_{123}> 0$, which implies that $z_{123}=1$. Note that $\mu=\mu z_{123}=\mu z_{124}$, which follows that $z_{124}=1$ and thus $f_{124}\ge 0$. Similarly, we have $f_{134}\ge 0$ and $f_{234}\ge0$. So, $f_{123}+f_{124}+f_{134}+f_{234}>0$ and then $z_{1234}=1$. Consequently, $1=z_{1234}=\mu z_{123}=\mu$. And $f=(1,0,0,0)$ is a solution. Therefore, $\mu=1$ is the eigenvalue.

In summary, the spectrum of signless 1-Laplacian for $2$-faces of $K$ is $\{0,1\}$.
%Consider the signless 1-Laplacian for  $1$-faces $\{(12),(13),(14),(23),(24),(34)\}$.
\end{example}

\begin{remark}
In fact, the above example could be generalized to $\Delta_{1,d-1}^{up}$ for $d$-dimensional simplex. That is, the spectrum of $\Delta_{1,d-1}^{up}$ for $d$-dimensional simplex is  $\{0,1\}$.
\end{remark}

\section{Courant nodal domain theorem}
\label{section:Courant}

In this section, we develop Courant nodal domain theorem for 1-Laplacian on complexes.

Similar to the reason (see \cite{ChangShaoZhang2016} Page 8), we should modify the definition of nodal domain as follows.

\begin{defn}

A set of $d$-simplexes is \sl{up-connected} (resp., \sl{down-connected}), if for any $\sigma$ and $\sigma'$ in such set, there exists a sequence of $\{\sigma_i\}_{i=0}^{i=m}$ such that $\sigma=\sigma_0\mathop{\sim}\limits^{\text{up}} \sigma_1\mathop{\sim}\limits^{\text{up}}\cdots\mathop{\sim}\limits^{\text{up}} \sigma_m=\sigma'$ (resp., $\sigma=\sigma_0\mathop{\sim}\limits^{\text{down}} \sigma_1\mathop{\sim}\limits^{\text{down}}\cdots\mathop{\sim}\limits^{\text{down}} \sigma_m=\sigma'$). %Similarly,  \sl{down-connected} is defined as for any $\sigma$ and $\sigma'$ in such set, there is a sequence of $\{\sigma_i\}_{i=0}^{i=n}$ such that $\sigma=\sigma_0\mathop{\sim}\limits^{\text{down}} \sigma_1\mathop{\sim}\limits^{\text{down}}\cdots\mathop{\sim}\limits^{\text{down}} \sigma_n=\sigma'$.
\end{defn}

\begin{defn}\label{def:nd}
The (up-/down-) nodal domains of an eigenvector $  f=(f_1, f_2, \cdots, f_q)$ of $\Delta_1$
are defined to be maximal (up-/down-) connected components of the support set $D(  f):=\{i: f_i\ne 0\}$.
\end{defn}

\begin{pro}\label{pro:ternarynu}
Suppose $(\mu,  f)$ is an eigenpair of the signless (up-/down-)
1-Laplacian and $D_1,\cdots,D_k$ are (up-/down-) nodal domains of $  f$. Let
$  f^i$ be defined as
\[
f^i_j=
\begin{cases}
\frac{f_j}{\sum_{j\in D_i(  f)}\deg_j|f_j|}, & \text{ if } \;\ j\in D_i(  f), \\
0, & \text{ if } \;\ j\not\in D_i(  f),\\
\end{cases}
\]
for $j=1,2,\cdots,q$. Then $(\mu,  f^i)$ is an eigenpair, $i=1,2,\ldots,k$.
\end{pro}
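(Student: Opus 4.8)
The plan is to show that restricting an eigenvector to a single nodal domain and renormalizing preserves the eigenpair condition, by verifying that the eigenvalue inclusions decouple across nodal domains. Recall from the coordinate form \eqref{eq:eigen-problem-up1-Laplacian} that $(\mu,x)$ being an eigenpair means there exist symmetric quantities $z_{ij_1\cdots j_{n+1}}\in \Sgn(x_i+x_{j_1}+\cdots+x_{j_{n+1}})$ such that $\sum z_{ij_1\cdots j_{n+1}}\in \mu d_i^{up}\Sgn(x_i)$ for every face index $i$ (and the analogous statement for the down case). I would fix a nodal domain $D_i(x)$ and the corresponding restricted-and-scaled vector $x^i$, and construct explicit witnesses $z^i$ for $x^i$ from the witnesses $z$ for $x$.

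The key structural observation to establish first is that the face-level sums appearing inside $\Sgn(\cdot)$ are supported within single nodal domains. If $\bar F$ is an $(n+1)$-face all of whose boundary $n$-faces lie in a common connected component of the support (i.e., all $x_{j}\ne 0$ and up-connected), then these indices belong to one nodal domain $D_i$; the corresponding term is scaled uniformly by the positive constant $\lambda_i:=\big(\sum_{j\in D_i(x)}d_j|x_j|\big)^{-1}$, so $x^i_j = \lambda_i x_j$ on $D_i$ and hence $\sgn$ of every relevant partial sum is unchanged there. For a face $\bar F$ meeting $D_i$ but not entirely contained in it, the crucial point is that any boundary face $F_j$ outside $D_i$ has $x^i_j=0$, and I must argue the associated $z^i$-witness can be chosen to decouple. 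Concretely, for indices $j\in D_i$ I set $z^i_{\cdots}=z_{\cdots}$ (using positive homogeneity of $\Sgn$, the sign of the sum is preserved under multiplication by $\lambda_i>0$), and for the remaining terms I use that $\Sgn(0)=[-1,1]$ gives enough slack to choose witnesses making the defining inclusion hold.

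The main obstacle, and the step I would spend the most care on, is the boundary interaction: an $(n+1)$-face $\bar F$ may have some boundary faces inside $D_i$ and some outside (either zero-valued, or nonzero but in a different nodal domain). I need to verify that in $x^i$ such a face contributes consistently. Here the definition of nodal domain as a maximal up-connected component of the support is exactly what rescues the argument: two nonzero faces sharing a common $(n+1)$-face are up-neighbours, hence lie in the same component, so a single $(n+1)$-face cannot straddle two distinct nonzero nodal domains. Thus for any face relevant to index $i\in D_i$, the only other boundary faces that are nonzero must also lie in $D_i$, and all others are zero in both $x$ and $x^i$. I would make this precise, then conclude that for each $i\in D_i(x)$ the sum $\sum z^i_{ij_1\cdots j_{n+1}}$ equals $\sum z_{ij_1\cdots j_{n+1}}$, which lies in $\mu d_i^{up}\Sgn(x_i)=\mu d_i^{up}\Sgn(x^i_i)$ since $\Sgn(x_i)=\Sgn(x^i_i)$ by positivity of $\lambda_i$; while for $i\notin D_i(x)$ one has $x^i_i=0$ and $\Sgn(0)=[-1,1]$, so the inclusion $0\in\mu d_i^{up}[-1,1]$ holds trivially with the freely chosen witnesses on zero-valued faces.

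Finally I would note that the normalization is chosen precisely so that $\|x^i\|^{up}=\sum_j d_j|x^i_j|=1$, confirming $x^i\ne 0$ lies on the constraint manifold, and invoke Lemma \ref{lem:one-homogeneous-eigen}(1) to identify the eigenvalue as $\mu=I^{up}(x^i)/\|x^i\|^{up}$, which is consistent with the original $\mu$ because the sign pattern is preserved. The down-case is handled identically, replacing up-neighbours by down-neighbours and $d_i^{up}$ by $d_i^{down}=n+1$, with the analogous decoupling guaranteed by maximality of down-connected components. The argument is essentially a verification that the sign data $\mathcal{F}(x)$ restricted to each nodal domain, together with zeros elsewhere, still satisfies the eigen-inclusions, an application in spirit of Lemma \ref{lem:eigen-Sgn-inclusion}.
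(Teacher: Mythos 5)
Your proposal is correct and follows essentially the same route as the paper: the paper's proof observes that two $n$-faces are (up-/down-)adjacent precisely when they occur in a common term $|\cdot|$ of $I(\cdot)$, so no single term can straddle two distinct nodal domains, deduces the termwise inclusions $\Sgn(x_j)\subset\Sgn(x^i_j)$ and $\Sgn(x_{j}+x_{j_1}+\cdots+x_{j_m})\subset\Sgn(x^i_{j}+x^i_{j_1}+\cdots+x^i_{j_m})$, and concludes directly by Lemma \ref{lem:eigen-Sgn-inclusion} --- exactly the decoupling you establish. The one point to tighten is your claim that for an index $l\notin D_i(x)$ the witnesses are ``freely chosen'' and give $0\in\mu d_l^{up}[-1,1]$: by symmetry of $z_{l j_1\cdots j_{n+1}}$ some of these witnesses are pinned to the ones already used for indices inside $D_i$, so the clean argument is to retain \emph{all} the original witnesses, which yields $\sum z_{l\cdots}\in\mu d_l^{up}\Sgn(x_l)\subset\mu d_l^{up}\Sgn(x^i_l)$ --- precisely the content of the lemma you invoke at the end.
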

\begin{proof}
It can be directly verified that $i\mathop{\sim}\limits^{\text{up}}j$ (resp. $i\mathop{\sim}\limits^{\text{down}}j$)  if and only if $f_i$ and $f_j$ appear in a same term $|\cdot|$ in the summation form of $I^{up}(\cdot)$ (resp. $I^{down}(\cdot)$). Hence, we derive that $\Sgn(f^i_j)\supset \Sgn(f_j)$ and $\Sgn(f^i_{j}+f^i_{j_1}+\cdots+f^i_{j_m}) \supset  \Sgn(f_{j}+f_{j_1}+\cdots+f_{j_m})$, $j'\sim j$, $j=1,2,\cdots,n$, $i=1,2,\cdots,k$. Then, using the coordinate form of 1-Laplacian eigenvalue problems \eqref{eq:eigen-problem-up1-Laplacian} and \eqref{eq:eigen-problem-down1-Laplacian}, we complete the proof. %by Lemma \ref{lem:eigen-Sgn-inclusion}, we complete the proof.
\end{proof}

Now, fix the dimension $d$ and $n:=\# S_d(K)$, and denote $I(\cdot)=I^{up}(\cdot)$ (resp. $I^{down}(\cdot)$) and $\|\cdot\|=\|\cdot\|^{up}$ (resp. $\|\cdot\|^{down}$). We apply the Liusternik-Schnirelmann theory to $\Delta_1 $. Note that $I(f)$ is even, and $X=\{f : \|f\|=1\}$ is symmetric. For a symmetric set $T\subset X$, i.e., $-T=T$, the Krasnoselski genus (see \cite{Chang1985,Rabinowitz1986}) of $T$ is defined to be
\begin{equation*}
\gamma(T) =
\begin{cases}
0, & \text{if}\; T=\varnothing,\\
\min\limits\{k\in\mathbb{Z}: \exists\; \text{odd continuous}\; h: T\to \mathbb{S}^{k-1}\}, & \text{otherwise}.
\end{cases}
\end{equation*}
Obviously, the genus is a topological invariant. Let us define
\begin{equation}
c_k=\inf_{\gamma(T)\ge k} \max_{  f\in T\subset X} I(f)=\inf_{\gamma(T)\ge k} \max_{  f\in T\subset \mathbb{R}^n\setminus \{0\}} \frac{I(f)}{\|f\|},\quad k=1,2, \ldots n.
\end{equation}
By the same way as already used in \cite{Chang2015}, it can be proved that these $c_k$ are critical values of $I(f)$. One has
$$c_1\leq c_2 \leq \cdots \leq c_n,$$
and if $0\le\cdots\le c_{k-1}< c_k=\cdots=c_{k+r-1}< c_{k+r}\leq \cdots\leq 1$, the multiplicity of $c_k$ is defined to be $r$. The Courant nodal domain theorem for the signless $1$-Laplacian reads
\begin{theorem}\label{th:nodal-domain}
Let $c_k$ be an eigenvalue with multiplicity $r$ and let $f^k$ be an eigenvector associated with $c_k$.  Then
$$1\leq  S(  f^k)\leq k+r-1,$$
where $S(  f^k)$ is the number of nodal domains of $f^k$.
\end{theorem}

\begin{proof}
Assume there are $n$ $d$-faces in the complex.  Suppose the contrary, that there exists an eigenvector $f^k=(f_1, f_2, \cdots, f_n)$ corresponding to the variational eigenvalue $c_k$ with multiplicity $r$ such that $S(  f^k)\ge k+r$. Let $D_1(  f^k)$, $\cdots$, $D_{k+r}(  f^k)$ be the nodal domains of $  f^k$. Let $  g^i=(g^i_1, g^i_2, \cdots, g^i_n)$, where
\[
g^i_j=
\begin{cases}
\frac{f_j}{\sum_{j\in D_i(  f)}\deg_j|f_j|}, & \text{ if } \;\ j\in D_i(  f), \\
0, & \text{ if } \;\ j\not\in D_i(  f),\\
\end{cases}
\]
for $i=1,2,\cdots, k+r$, $j=1,2,\cdots,n$. By the construction of $g^i$, $i=1,2,\cdots, k+r$, we have:

(1) The nodal domain of $  g^i$ is the $i$-th nodal domain of
$  f^k$, i.e., $D(  g^i)=D_i(  f^k)$;

(2) $D(  g^i)\cap D(  g^j)=\varnothing$, $i\ne j$;

(3) By Proposition \ref{pro:ternarynu}, $  g^1,\cdots,  g^{k+r}$ are all eigenvectors with the same eigenvalue $c_k$.

Now, for any $f\in\mathrm{span}(  g^1,\cdots,  g^{k+r})\cap X $, there exist $a_1,\ldots,a_{k+r}$ such that $f=\sum\limits_{i=1}^{k+r}a_i  g^i\in X$. And for any $l\in \{1,2,\ldots,n\}$, there exists a  unique $j$ such that  $f_l=a_jg^j_l$. Hence, $|f_l|=\sum_{j=1}^{k+r}|a_j||g^j_l|$. Since $  f\in X$, $  g^j\in X$, $j=1,\cdots,k+r$, we have
$$
1=\sum_{l=1}^n\deg_l|f_l|=\sum_{l=1}^n\deg_l\sum_{j=1}^{k+r}|a_j||g^j_l|
=\sum_{j=1}^{k+r}|a_j|\sum_{l=1}^n\deg_l|g^j_l|=\sum_{j=1}^{k+r}|a_j|.
$$
Note that $I(\cdot)$ is convex and even. Therefore,  we have
\begin{align*}
I(  f)&=I(\sum\limits_{i=1}^{k+r}a_i  g^i)
=I(\sum\limits_{i=1}^{k+r}|a_i|\sgn(a_i)  g^i)
\\&\le \sum\limits_{i=1}^{k+r}|a_i|  I(\sgn(a_i)  g^i)
\le \sum\limits_{i=1}^{k+r}|a_i|  I(g^i)
\\&\leq \max\limits_{i=1,2,\cdots,k+r}I(  g^i).
\end{align*}
Note that $  g^1,\cdots,  g^{k+r}$ are non-zero orthogonal vectors, so $\mathrm{span}(  g^1,\cdots,  g^{k+r})$ is a $k+r$ dimensional linear space. It follows that $\mathrm{span}(  g^1,\cdots,  g^{k+r})\cap X$ is a symmetric set which is homeomorphous to $S^{k+r-1}$. Obviously, $\gamma(\mathrm{span}(  g^1,\cdots,  g^{k+r})\cap X)=k+r$. Therefore, we derive that
\begin{align*}
c_{k+r}&=\inf\limits_{\gamma(A)\ge k+r}\sup\limits_{  f\in A} I(  f)
\\&\leq \sup\limits_{  f\in \mathrm{span}(  g^1,\cdots,  g^{k+r})\cap X} I(  f)
\\&=\max\limits_{i=1,\cdots,k+r} I(  g^i)
\\&=c_k,
\end{align*}
It contradicts with $c_k< c_{k+r}$. So the proof is completed.
\end{proof}

\section{On some combinatorial parameters of complex}
\label{section:parameter}
In this section, we concentrate on the relationships between the eigenvalues of signless 1-Laplacian and other attractive parameters, such as chromatic number, independence number and clique covering number.
\subsection{Independence number and chromatic number for vertices}
Firstly, we recall the concepts of independence number and chromatic number of a hypergraph. The definition of chromatic number of hypergraphs generalize chromatic number of graphs in various ways, see for example \cite{KrivelevichSudakov}. The chromatic number is defined for $r$-unifrorm hypergraphs as

$$\chi_s=\min\{k:\exists k\text{-partion }(V_1,\ldots,V_k) \text{ such that } |V_i\cap E|\le s,\forall\text{ edge } E, \forall\,i\},$$
 while the independence number of a hypergraph is defined by
$$\alpha_s=\max\{|A|:|A\cap E|\le s, \forall \text{ edge }E\}$$
for $1\leq s \leq r-1$ (see \cite{KrivelevichSudakov}).
%while the chromatic number is
%$$\chi_s=\min\{k:\exists k\text{-partion }(V_1,\ldots,V_k) \text{ such that } |V_i\cap F|\le s,\forall\text{ edge } F, \forall\,i\}.$$
%$$\chi_s=\min\{k:|V_i\cap F|\le s,\forall F; V_1\cup \cdots\cup V_k=V, V_i\ne \varnothing, V_i\cap V_j=\varnothing, \forall i\ne j\}$$
Note that a simplicial complex can be regarded as a hypergraph. Thus, the definition of  independence number and chromatic number for simplicial complexes can be defined as follows:
\begin{defn}[Independence number]
For a simplicial complex $K=(V,S)$, the independence number is
$$\alpha_s=\max\{|A|:|A\cap F|\le s,\; \forall \text{face }F\}.$$
\end{defn}

\begin{defn}[Chromatic number]
For a simplicial complex $K=(V,S)$, the chromatic number is
$$\chi_s=\min\{k:\exists\, k\text{-partion }(V_1,\ldots,V_k) \text{ such that } |V_i\cap F|\le s,\,\forall\text{ face } F, \forall\,i\}.$$
\end{defn}

In \cite{Golubev}, the independence number and chromatic number of simplicial complex are respectively defined by
$$\alpha=\max\{|A|:A\not\supset F, \forall \text{ maximal face }F\}$$
%$$\alpha=\max\{|A|:|A\cap F|\le |F|-1, A\subset V, \forall \text{ maximal }F\}$$
and
 $$\chi=\min\{k:\exists k\text{-partion }(V_1,\ldots,V_k) \text{ such that } V_i\not\supset F,\forall\text{ maximal face } F, \forall \,i\}$$
However, in the proof of main theorems in \cite{Golubev}, the author essentially deals with $\alpha_{d}$ and $\chi_{d}$, where $d$ is the dimension of complex. That is, the results still hold if we replace `$\alpha$' and  `$\chi$' by  `$\alpha_{d}$' and  `$\chi_{d}$' in those theorems. In this subsection, we  will concentrate on $\chi_s$ and $\alpha_s$ and study their relations with eigenvalues of 1-Laplacian.
An elementary result for the relations of these concepts are:
\begin{pro}
For simplicial complex $K=(V,S)$, we have the statements below.
\begin{enumerate}
\item[(1)] $\chi_s\alpha_s \ge |V|$, $\chi_s\le  \lceil \chi_1/s \rceil $,  for all $s$. Here $\lceil x \rceil$ is the ceiling function on $x$, i.e., the minimum integer that does not less than $x$. Moreover, $\chi_s\le \lceil \chi_t/\lfloor s/t \rfloor \rceil$, where $\lfloor x \rfloor$ is the Gauss function on $x$, i.e., the maximum integer that does not exceed $x$.
\item[(2)]  $ \chi\alpha \ge |V|$, $\chi_d\le\chi\le \lceil \chi_1/d\rceil$, $\alpha\le \alpha_{d}$, where $d$ is the dimension of the complex. For a pure (i.e., homogeneous) complex, $\alpha= \alpha_{d}$.
\item[(3)]The above definitions for independence number $\alpha_s$ and chromatic number $\chi_s$ are respectively equivalent to  $$
\alpha'_s=\max\{|A|:|A\cap F|\le s, \forall s\text{-face }F\}
$$
and
$$\chi'_s=\min\{k:\exists k\text{-partion }(V_1,\ldots,V_k) \text{ such that } |V_i\cap F|\le s,\forall\text{ s-face } F, \forall\,i\}.$$
\end{enumerate}
\end{pro}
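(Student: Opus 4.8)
The plan is to treat the three parts in the order (3), then the ``product'' and ``merging'' bounds of parts (1)--(2), and finally the comparison inequalities together with the purity statement in part (2). Proving (3) first is convenient because it replaces every constraint of the form ``for all faces $F$'' by one ranging only over the minimal relevant faces, which clarifies the later arguments. For $\alpha_s=\alpha'_s$ and $\chi_s=\chi'_s$ I would use only that $S$ is closed under inclusion. The ``$\le$'' directions are immediate, since $\alpha'_s,\chi'_s$ impose fewer constraints. For the reverse, suppose $|A\cap F|\le s$ holds for every $s$-face $F$ (cardinality $s+1$); if some face $G$ had $|A\cap G|\ge s+1$, I would choose $G'\subseteq A\cap G$ with $|G'|=s+1$, observe $G'\in S$ by closure, so $G'$ is an $s$-face with $|A\cap G'|=|G'|=s+1>s$, a contradiction. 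Faces of cardinality $\le s$ satisfy $|A\cap F|\le|F|\le s$ automatically, so the restricted condition already forces the full one, and the same reasoning applies verbatim to the partition condition defining $\chi_s$.

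For the product bound $\chi_s\alpha_s\ge|V|$ I would take a partition $(V_1,\dots,V_{\chi_s})$ witnessing $\chi_s$; each block is $s$-independent, so $|V_i|\le\alpha_s$, whence $|V|=\sum_i|V_i|\le\chi_s\alpha_s$, and $\chi\alpha\ge|V|$ is identical with ``$s$-independent'' replaced by ``contains no maximal face''. For the merging bound $\chi_s\le\lceil\chi_1/s\rceil$ I would start from an optimal $\chi_1$-partition and group its color classes into $\lceil\chi_1/s\rceil$ blocks of at most $s$ classes; since $|W_j\cap F|=\sum_{V_i\subseteq W_j}|V_i\cap F|\le s$ for every face $F$, the coarser partition is $\chi_s$-valid. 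The bound $\chi_s\le\lceil\chi_t/\lfloor s/t\rfloor\rceil$ is the same, grouping an optimal $\chi_t$-partition into blocks of at most $\lfloor s/t\rfloor$ classes so that $|W_j\cap F|\le\lfloor s/t\rfloor\cdot t\le s$. These arguments are clean precisely because the target condition is the \emph{uniform} bound $|W_j\cap F|\le s$, to which intersection sizes simply add.

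For $\chi_d\le\chi$ and $\alpha\le\alpha_d$ I would use that every top-dimensional face (cardinality $d+1$) is maximal: a partition, respectively a set, avoiding every maximal face automatically satisfies $|V_i\cap F|\le d$ for top-dimensional $F$, while all lower faces obey $|V_i\cap F|\le|F|\le d$ for free. Hence a $\chi$-valid partition is $\chi_d$-valid and an $\alpha$-independent set is $\alpha_d$-independent. For a pure complex every maximal face is top-dimensional of cardinality $d+1$, so the conditions ``contains no maximal face'' and ``$|V_i\cap F|\le d$ for all faces'' coincide, which gives $\alpha=\alpha_d$ (and likewise $\chi=\chi_d$).

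The hard part will be the middle inequality $\chi\le\lceil\chi_1/d\rceil$. The merging construction above only produces blocks with $|W_j\cap F|\le d$, so it bounds $\chi_d$ rather than $\chi$, and since $\chi\ge\chi_d$ this does not by itself bound $\chi$. In the pure case the gap closes because $\chi=\chi_d$ and the claim follows from part (1) with $s=d$. For a general non-pure complex, a low-dimensional maximal face has cardinality $\le d$ and can be swallowed entirely by a single merged block, so an arbitrary grouping may fail; one must instead choose the grouping of the $\chi_1$ color classes so that no block covers all of the (at most $d$) distinct classes meeting any low-dimensional maximal face. Formulating and solving this auxiliary constrained partition of the color classes into blocks of size $\le d$—equivalently, avoiding the hyperedges coming from the low-dimensional maximal faces while using only $\lceil\chi_1/d\rceil$ blocks—is where I expect the real difficulty to lie.
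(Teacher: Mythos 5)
Nearly everything you prove coincides with the paper's own argument: part (3) via downward closure (an over-large intersection with a bigger face would contain an $s$-face lying entirely in $A$), the grouping of $\chi_1$- (resp.\ $\chi_t$-) colour classes into blocks of at most $s$ (resp.\ $\lfloor s/t\rfloor$) classes, the product bounds via the observation that each colour class is itself an independent set, and the derivation of $\alpha\le\alpha_d$, $\chi_d\le\chi$, and $\alpha=\alpha_d$ for pure complexes from the fact that a maximal face has at most $d+1$ vertices. All of this is correct and essentially identical to what the paper does (the paper dismisses $\chi_s\alpha_s\ge|V|$ as easy; your one-line argument is the intended one).

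The single step you could not complete, $\chi\le\lceil\chi_1/d\rceil$, is precisely the step the paper does not prove either: together with $\chi\alpha\ge|V|$ it is attributed to Golubev's paper, whose setting (as the authors themselves concede just before the proposition) ``essentially deals with $\alpha_d$ and $\chi_d$''. Your diagnosis of the obstruction is not only legitimate but decisive: for non-pure complexes no choice of grouping can rescue the argument, because the inequality itself can fail. Take $d=2$ and let $K$ consist of the full triangle on $\{1,2,3\}$ together with the ten edges of a $K_5$ on $\{4,\dots,8\}$ as maximal faces. Then $\chi_1=5$ (a proper colouring of the disjoint union of a triangle and a $K_5$), so $\lceil\chi_1/d\rceil=3$, whereas $\chi=5$ since the maximal $K_5$-edges force the vertices $4,\dots,8$ into pairwise distinct classes. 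Under purity the gap closes exactly as you say: $\chi=\chi_d$ and the claim reduces to part (1) with $s=d$. So your proposal covers everything the paper actually proves, and the part you isolate as hard is a defect of the statement (or of the citation) for general complexes rather than of your approach.
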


\begin{proof}~~
\begin{enumerate}
\item[(1)] It is evident that $\chi_s\alpha_s \ge |V|$ by the definitions. To prove $\chi_s\le  \chi_1/s$, let
 $(V_1,\ldots,V_{\chi_1})$ be a partition of $V$ such that $|V_i\cap F|\le 1$, for any face $F$.
Let $B_i=A_{(i-1)s+1}\cup \cdots \cup A_{is}$ for $i=1,\ldots,m-1$, and $B_m=A_{(m-1)s+1}\cup\cdots\cup A_{\chi_1}$, where $m=\lceil \chi_1/s \rceil$. Then $|B_i\cap F|\le \sum_{j=1}^s|A_{(i-1)s+j}\cap F|\le s$, $i=1,\ldots,m$. Therefore, $\chi_s\le m$.

The rest of the proof is similar with the above, but we shall provide the details for reader's convenience. Let
 $(V_1,\ldots,V_{\chi_t})$ be a partition of $V$ such that $|V_i\cap F|\le t$, $\forall F$.
Let $B_i=A_{(i-1)s'+1}\cup \cdots \cup A_{is'}$ for $i=1,\ldots,m-1$, and $B_m=A_{(m-1)s'+1}\cup\cdots\cup A_{\chi_t}$, where $m=\lceil \chi_t/s' \rceil$ and $s'=\lfloor s/t \rfloor$. Then $$|B_i\cap F|\le \sum_{j=1}^{s'}|A_{(i-1)s'+j}\cap F|\le s't= \lfloor s/t \rfloor t\le s,$$ for $i=1,\ldots,m$. Therefore, $\chi_s\le m$.

\item[(2)] Note that $\chi\alpha \ge |V|$ and $\chi\le \lceil \chi_1/d\rceil$ have been proved in \cite{Golubev}.  Now we prove $\alpha\le \alpha_{d}$. Since the dimension of $K$ is $d$, the maximal face $F$ has at most $d+1$ elements. So $F\not\subset A$  implies $|F\cap A|\le |F|-1\le d$, for any maximal face $F$. Thus, $|F\cap A| \le d$ holds for any face $F$. This deduces that
$$\{|A|:A\not\supset F, \forall \text{ maximal face }F\} \subset \{|A|:|A\cap F|\le d, \forall \text{ face }F\}.$$
Therefore,  $\alpha\le \alpha_{d}$. Similarly, $\chi\ge \chi_d$.

A maximal face (i.e., facet) is not need to be a $d$-face. But if the complex is homogeneous (or pure), then facets must  coincide with $d$-faces. So, it is easy to check that $\alpha=\alpha_d$.

\item[(3)] It is easy to check that $\alpha'_s \geq \alpha_s.$  Next we will show the reverse inequality.
Indeed, it is enough to show that for any set $A$ with $|A \cap F|\leq s $ for all $s$-faces $F$, we have $|A \cap F|\leq s$ for any face whose dimension is larger than $s$. For the contrary, there is some $(s+k)$-face $F'$ satisfying $|A \cap F'|\geq s+1$. Then there is one $s$-face whose vertices are
in $A$, which is a contradiction.  The proof for $\chi'_s=\chi_s$ is similar.
\end{enumerate}
\end{proof}

\begin{remark}\label{rem:alpha!=alpha_d}
There exists a complex $K$ such that $\alpha<\alpha_d$, where $d=\mathrm{dim}(K)$. Indeed, let $V=\{1,2,3,4,5\}$ and let

$K=\{\{1\},\{2\},\{3\},\{4\},\{5\},\{1,2\},\{1,3\},\{2,3\},\{1,2,3\},\{1,4\},\{2,5\}\}$. Then $K$ is a complex with dimension $d=2$. It can be verified that $\alpha=3$ (where an independence set can be $\{3,4,5\}$) and $\alpha_d=4$ (where an independent set can be $\{1,2,4,5\}$).
\end{remark}

\begin{theorem}\label{thm:Hoffman}
Let  $K=(V,S)$ be a nonempty $d$-dimensional complex. Then $$\alpha\le\alpha_d\le  \min\left\{|V|(1-\frac{ \mu_{d-1}}{2M_0}),|V|\frac{M_0}{M_0+\mu_{d-1}},|V|\frac{M_0-\mu_0}{M_0+m_0}\right\}$$
and hence
$$\chi\ge\chi_d\ge  \max\left\{\frac{ 2M_0}{2M_0-\mu_{d-1}},1+\frac{\mu_{d-1}}{M_0},\frac{M_0+m_0}{M_0-\mu_0}\right\}$$
where $M_0$ and $m_0$ are the largest and smallest degree of vertices in $K$,
 $\mu_{d-1}$ is the minimum unnormalized eigenvalue of $\Delta^{up}_{1,d-1}$
\footnote{The eigenvalues which we concern are the unnormalized eigenvalues of $\Delta^{up}_{1,d-1}$, i.e., the corresponding constraint relates to $\|\vec f\|_1:=\sum_{(d-1)\text{-face }F}|f_F|$, not the normalized version $\|\vec f\|:=\sum_{(d-1)\text{-face }F}\text{deg}_F|f_F|$}, and similarly $\mu_0$ is the minimum unnormalized eigenvalue of $\Delta^{up}_{1,0}$.
\end{theorem}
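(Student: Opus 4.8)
The plan is to read $\mu_{d-1}$ as the minimum of a Rayleigh quotient and to produce a good test vector out of an optimal independent set. By the variational results of Section~\ref{section:lemmas} (in particular the fact that minimizers of $I^{up}(\cdot)/\|\cdot\|_1$ are eigenvectors, together with $c_1=\min_x I^{up}(x)/\|x\|_1$), we have
$$\mu_{d-1}=\min_{x\ne 0}\frac{I^{up}(x)}{\|x\|_1}=\min_{x\ne 0}\frac{\sum_{\bar F\in S_d(K)}\left|\sum_{F\prec\bar F}x_F\right|}{\sum_{F\in S_{d-1}(K)}|x_F|},$$
so every concrete choice of $x$ furnishes an upper bound on $\mu_{d-1}$. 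I would fix a set $A\subseteq V$ attaining the independent number, $|A|=\alpha_d$, and write $B=V\setminus A$, so that $|B|=|V|-\alpha_d$. The structural input is the defining property of $A$ (via the equivalent form $\alpha_d'=\alpha_d$ in the elementary proposition): no $d$-face is contained in $A$, equivalently every $d$-face has at least one vertex in $B$. This is what will force cancellation in the numerator.

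Next I would build a signed test vector $x$ on the $(d-1)$-faces out of the partition $(A,B)$, chosen so that on every $d$-face meeting $B$ in a single vertex the boundary sum $\sum_{F\prec\bar F}x_F$ vanishes, while $\|x\|_1$ stays comparable to $|V|$. The device is that inside one $d$-face the $d+1$ boundary $(d-1)$-faces correspond bijectively to its vertices (the omitted one), so a sign assignment driven by membership in $A$ versus $B$ controls each local sum: faces touching $B$ in exactly one vertex cancel, and only faces meeting $B$ more heavily survive. Bounding the number and weight of the surviving top faces by the maximum vertex degree $M_0$ — each vertex of $B$ lies in at most $M_0$ top faces — should give $I^{up}(x)\le 2M_0|B|$, while the chosen normalization yields $\|x\|_1\ge|V|$. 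Dividing produces
$$\mu_{d-1}\le\frac{2M_0|B|}{|V|}=2M_0\left(1-\frac{\alpha_d}{|V|}\right),$$
which rearranges to $\alpha_d\le|V|\left(1-\frac{\mu_{d-1}}{2M_0}\right)$. Combined with $\alpha\le\alpha_d$ from the elementary proposition, the first chain of inequalities follows.

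The second chain is then purely algebraic: the bound $\chi_d\alpha_d\ge|V|$ already established gives $\chi_d\ge|V|/\alpha_d$, and substituting the upper bound on $\alpha_d$ yields
$$\chi_d\ge\frac{|V|}{|V|\left(1-\frac{\mu_{d-1}}{2M_0}\right)}=\frac{2M_0}{2M_0-\mu_{d-1}},$$
while $\chi\ge\chi_d$ (again from the elementary proposition) finishes the statement. The main obstacle is the middle step: selecting the signed vector on $(d-1)$-faces so that the local boundary sums genuinely cancel for the top faces touching $B$ lightly, and then carrying out the degree count so that precisely the constant $2M_0$ and the normalization $|V|$ emerge. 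The factor $2$ is the cost of a uniform bound; in the graph case $d=1$ the same recipe (assigning $\pm1$ to vertices according to $A$) already gives the sharper constant $M_0$, and the delicate part for general $d$ is ensuring that each surviving top face is charged to a bounded number of $B$-vertices, which is where the real work lies.
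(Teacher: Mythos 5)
Your overall strategy---exhibit an explicit test vector on the $(d-1)$-faces and use $\mu_{d-1}\le I^{up}(x)/\|x\|_1$---is legitimate in principle and genuinely different from the paper's argument, but the step you yourself flag as ``where the real work lies'' is exactly the step that is missing, and the two quantitative claims you would need from it are unjustified and do not hold for the natural candidate vector. First, the bound $I^{up}(x)\le 2M_0|B|$: after the cancellation you describe, the surviving terms of $I^{up}(x)$ are indexed by $d$-faces meeting $B$ in at least two vertices, and each such face can only be charged to a vertex pair (an edge) inside $B$. Since $M_0$ is the largest \emph{vertex} degree, i.e.\ the maximal number of $1$-faces containing a vertex, the number of $d$-faces containing a fixed vertex or a fixed edge is in general not controlled by $M_0$ (a vertex can lie in on the order of $\binom{M_0}{d}$ top faces), so ``each vertex of $B$ lies in at most $M_0$ top faces'' fails for $d\ge 2$. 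Second, the normalization $\|x\|_1\ge|V|$: your vector is supported on $(d-1)$-faces, so $\|x\|_1$ counts (weighted) $(d-1)$-faces with at most one vertex in $B$, and there is no reason this should dominate the number of vertices. Your recipe does work, and is even sharper, in the graph case $d=1$, precisely because there the objects being counted are vertices and edges; for $d\ge2$ the dimension mismatch defeats the counting as described.

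For contrast, the paper routes the eigenvalue through a much weaker fact and puts the combinatorial weight elsewhere. It uses only $\mu_{d-1}\le m_{d-1}$, where $m_{d-1}$ is the minimum up-degree of a $(d-1)$-face, obtained from the single-coordinate test vector $e_{F_0}$. The real work is a downward induction on $i$ showing that every $i$-face contained in $A$ lies in at least $m_{d-1}$ many $(i+1)$-faces meeting $A^c$; at $i=0$ this gives $|E(A,A^c)|\ge\alpha_d\, m_{d-1}\ge\alpha_d\,\mu_{d-1}$. The factor $2M_0$ then comes from the opposite direction: the identity $M_0=\max_{y\ne0}\sum_{i\sim j}|y_i+y_j|/\sum_i|y_i|$ applied to the two-valued vertex vector taking the value $|A^c|$ on $A$ and $|A|$ on $A^c$ yields $M_0\ge|E(A,A^c)|\,|V|/\bigl(2\alpha_d(|V|-\alpha_d)\bigr)$. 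Combining the two bounds on $|E(A,A^c)|$ gives the theorem. To salvage your direct approach you would need to replace $M_0$ by a quantity counting $d$-faces per vertex, or find a different cancellation scheme; as written the middle step is a genuine gap. (Your derivation of the $\chi$-inequalities from the $\alpha$-inequality via $\chi_d\alpha_d\ge|V|$ and $\chi\ge\chi_d$ is fine and consistent with the paper.)
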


\begin{proof}
Let $A\subset V$ be the largest independent set with $|A|=\alpha_d$, and for some fixed $a,b\in \mathbb{R}$, let $$f_i=\begin{cases}a,& i\in A,\\b,& i\in A^c:=V\setminus A.
\end{cases}$$
Then \begin{align*}
\sum_{i\sim j}|f_i+f_j|&=\sum_{i\sim j,i\in A,j\in A^c}|a+b|+\sum_{i\sim j\text { in } A}2|a|+\sum_{i\sim j\text { in } A^c}2|b|
\\&=|a+b|\cdot|E(A,A^c)|+|b|(\vol(A^c)-|E(A,A^c)|) %\ge \sum_{i\sim j,i\in A,j\in A^c}|a+b|=
\end{align*}
and
$$\sum_{i\in V}|f_i|=|a||A|+|b||A^c|.$$
Thus,
$$\max_{i\in V} \deg^{up}_i=\max_{\vec g\ne \vec 0} \frac{\sum_{i\sim j}|g_i+g_j|}{\sum_{i\in V}|g_i|}\ge  \frac{\sum_{i\sim j}|f_i+f_j|}{\sum_{i\in V}|f_i|}\geq\frac{|a+b|\cdot|E(A,A^c)|}{|a||A|+|b||A^c|}.$$
Taking $a=|A^c|$ and $b=|A|$,  we get
\begin{equation}\label{eq:2-upper-estimate}
\max_{i\in V} \deg^{up}_i\ge |E(A,A^c)|\frac{|V|}{2\alpha_d(|V|-\alpha_d)}.
\end{equation}

Let $M_i$ and $m_i$ be respectively maximum and minimum  number of $(i+1)$-faces that contains an $i$-face, $i\in\{0,1,\ldots,d-1\}$.

Since $A$ is independent, for each $(d-1)$-face $F\subset A$, the number of $d$-faces that contains $F$ and meets $A^c$ is larger than or equals to $m_{d-1}$. And it can be proved that $m_0>m_1>\cdots>m_{d-1}$.

%Suppose the contrary, that there exists $i$ such that $\frac{m_i}{i+1}\le \frac{m_{i+1}}{i+2}$.

\textbf{Claim}: for any $i$-face $F\subset A$ with $i\in\{0,\ldots, d-1\}$, the number of $(i+1)$-faces that contains $F$ and meets $A^c$ is larger than or equals to $m_{d-1}$.

We prove the above claim by mathematical induction on $i$ from $(d-1)$ to $0$.

For $i=d-1$, the claim holds accroding to definition of $m_{d-1}$ and the independence of $A$.

Suppose the claim  holds for $i$. Then for the case of  $(i-1)$, for any $(i-1)$-face $F$, there are two subcases:
\begin{enumerate}
\item $F$ contains in an $i$-face $F'$ with all its vertices in $A$.

In this case, using the inductive hypothesis for $F'$, there exist $v_1,\ldots,v_{m_{d-1}}\in A^c$ such that $F'\cup \{v_1\}$, $\cdots$, $F'\cup \{v_{m_{d-1}}\}$ are $(i+1)$-faces. In consequence, $F\cup \{v_1\}$, $\cdots$, $F\cup \{v_{m_{d-1}}\}$ are $i$-faces, which means that the claim holds for such $F$.

\item There is no $i$-face with all its vertices in $A$ that contains $F$.

In this case, all $i$-faces that contains $F$ must meet $A^c$, and the number of $i$-faces containing $F$ is at least $m_{i-1}\ge m_{d-1}$. Thus the claim holds.
\end{enumerate}

%For any $(d-2)$-face $F\subset A$, if $F$ contains in a $(d-1)$-face $F'$ with all its vertices in $A$, then by the definition of $m_{d-1}$, there exists $v_i\in A^c$ such that $F'\cup \{v_i\}$ is a $d$-face, $i=1,\ldots,m_{d-1}$. Therefore, $F\cup\{v_i\}$ is a $(d-1)$-face, $i=1,\ldots,m_{d-1}$. %For any $d$-face $\tau\supset F'$, $\tau\cap A^c\ne\varnothing$ and $\tau\supset F$. Since the number of such $\tau$ is at least $m_{d-1}$,
%Hence, the number of $(d-1)$-faces that contains $F$ and meets $A^c$ is larger than or equals to $m_{d-1}$. If there is no $(d-1)$-face in $A$ containing $F$, then the number of $(d-1)$-faces that contains $F$ and meets $A$ is larger than or equals to $m_{d-2}>m_{d-1}$.

%Similarly, by mathematical induction, we can prove that for any $i$-face $F\subset A$ with $i\in\{0,\ldots, d-2\}$, the number of $(i+1)$-faces that contains $F$ and meets $A^c$ is larger than or equals to $m_{d-1}$.
In particular, for $i=0$, it means that $|E(\{i\},A^c)|\ge m_{d-1}$ for any $i\in A$. Thus, $$|E(A,A^c)|=\sum_{i\in A}|E(\{i\},A^c)|\ge |A| m_{d-1}=\alpha_{d}m_{d-1}.$$

%For any $(d-1)$-face $\tau$, $$|\{(d-1)\text{-face }\sigma  \text{ up connect to } \tau \}|=d |\{i\in V\setminus \tau: \tau\cup \{i\}\text{ is }d\text{-face}\}|\ge dm_{d-1}$$
%So the minimal degree of $(d-1)$-faces is lager

Taking $\vec g=\vec e_{F_0}$, where $F_0$ is a $(d-1)$-face such that $|\{i\in F_0^c: F_0\cup \{i\}\text{ is }d\text{-face}\}|=m_{d-1}$. Then $m_{d-1}=I^{up}(\vec g)/\|\vec g\|_{1}\ge \mu_{d-1}$ and hence,
\begin{equation}\label{eq:1-lower-estimate}
|E(A,A^c)|\ge \alpha_{d}\mu_{d-1}.
\end{equation}

Combining \eqref{eq:2-upper-estimate} and  \eqref{eq:1-lower-estimate}, we have
$$
M_0=\max_{i\in V} \deg^{up}_i\ge \mu_{d-1}\frac{|V|}{2(|V|-\alpha_d)},
$$
which derives our desired inequality.

It is also clear that $\alpha_{d}m_0\le |E(A,A^c)|\le \alpha_{d}M_0$ and $\vol(A^c)\le (|V|-\alpha_{d})M_0$.

Taking $a=-1$ and $b=1$, we obtain
$$\mu_0=\inf_{\vec g\ne \vec 0} \frac{\sum_{i\sim j}|g_i+g_j|}{\sum_{i\in V}|g_i|}\le  \frac{\sum_{i\sim j}|f_i+f_j|}{\sum_{i\in V}|f_i|}= \frac{\vol(A^c)-|E(A,A^c)|}{|V|}\le \frac{(|V|-\alpha_{d})M_0-\alpha_{d}m_0}{|V|},$$
which implies $\alpha_{d}\le |V|\frac{M_0-\mu_0}{M_0+m_0}$.

Note that
$$\max_{ab\ne 0} \frac{|a+b|}{|a||A|+|b||A^c|}=\max\{\frac{1}{|A|},\frac{1}{|A^c|}\}=\max\{\frac{1}{\alpha_d},\frac{1}{|V|-\alpha_d}\}.$$
This implies
\begin{equation}\label{eq:1-upper-estimate}
\max_{i\in V} \deg^{up}_i\ge |E(A,A^c)|\max\{\frac{1}{\alpha_d},\frac{1}{|V|-\alpha_d}\}.
\end{equation}
Combining \eqref{eq:1-upper-estimate} and  \eqref{eq:1-lower-estimate}, we have
$$
M_0=\max_{i\in V} \deg^{up}_i\ge \mu_{d-1}\max\{1,\frac{\alpha_d}{|V|-\alpha_d}\},
$$
which derives $\alpha_d\le \frac{M_0}{M_0+\mu_{d-1}}|V|$.
\end{proof}

\begin{remark}Here the proof of Theorem \ref{thm:Hoffman} is inspired by the technique of the proof of Theorem 2 in \cite{Golubev}.

Since $A$ is the maximal independent set,  for any $j\in A^c$, $A\cup \{j\}$ is not independent, i.e., there is a $d$-face in $A\cup \{j\}$ containing $j$ as its vertex, which implies that $|E(A,\{j\})|\ge d$.  Thus,
\begin{equation}\label{eq:2-lower-estimate}
|E(A,A^c)|=\sum_{j\in A^c}|E(A,\{j\})|\ge |A^c|d=(|V|-\alpha_{d})d.
\end{equation}
Combining \eqref{eq:1-upper-estimate} with  \eqref{eq:2-lower-estimate}, we have
$$\alpha_d\ge |V|\frac{d}{M_0+d}.$$
Combining \eqref{eq:2-upper-estimate} with  \eqref{eq:2-lower-estimate}, we have
$$\alpha_d\ge |V|\frac{d}{2M_0}.$$
\end{remark}

\begin{remark}
In the one-dimensional case, i.e., the graph case, we note Hoffman's result $\alpha\le |V|\frac{\lambda_{\max}-m_0}{\lambda_{\max}}$, where $\lambda_{\max}$ is the largest eigenvalue of the Laplacian, $m_0$ is the minimal degree.  For a $5$-order cyclical graph $G=(V,E)$ with $V=\{1,2,3,4,5\}$ and $E=\{\{1,2\},\{2,3\},\{3,4\},\{4,5\},\{5,1\}\}$,  $\lambda_{\max}(G)=2-2\cos\frac45\pi\approx 3.618$ and thus Hoffman's upper bound $\approx 5\frac{3.618-2}{3.618}=2.23$. Note that the upper bound in Theorem \ref{thm:Hoffman} is $5\frac{2-\frac25}{4}=2$, which is better than Hoffman's upper bound.

Since Theorem 1.2 in \cite{Golubev} coincides with Hoffman's bound when the simplicial complex is 1-dimensional, %that is, a finite simple graph,
it is clear that Theorem \ref{thm:Hoffman} could provide better bounds on some special cases. That is, Theorem \ref{thm:Hoffman} can be comparable to some kinds of Hoffman's bound \cite{Hoffman70,Hoffman03} like Theorem 1.2 in \cite{Golubev}.
\end{remark}

So far, we focused on vertices of simplicial complexes, i.e. $0$-faces. In fact, we can also define independence number and chromatic number for any $i$-face of simplicial complexes. In the following subsection, we will give corresponding definitions and study the relationships between eigenvalues of 1-Laplacian and them.

\subsection{Chromatic number for $i$-faces}
%A common problem in graph theory is to color the vertices of a graph, which is 1-dim simplicial complex, such that any two adjacent vertices
%are colored by different colors. While calculating the chromatic number is NP-complete. In this subsection, we give a relationship between eigenvalue $c_1^{up}$
%and the chromatic number for simplicial complexes.

\begin{defn}[chromatic number]
A $d$-face coloring of a simplicial complex assigns a color to each $d$-face so that no two faces that contain in the same $(d+1)$-face have the same color. The smallest number of colors needed is called its chromatic (or coloring) number.
\end{defn}

\begin{defn}[dual Cheeger constant]
The dual Cheeger constant on the set $S_d$ of $d$-faces is defined as
$$
h^{up*}=\max\limits_{A\cap B=\varnothing\ne A\cup B\subset S_d}\frac{2|E_{S_d}(A,B)|}{\vol(A)+\vol(B)},
$$
where $E_{S_d}(A,B)$ it the collection of all $(d+1)$-simplices that has some $d$-face in $A$ and also has some $d$-face in $B$.
\end{defn}

One can easily see that the chromatic number of all $d$-faces is at least $d+2$. %$\chi\ge d+2$.

\begin{theorem}\label{th:color}
Let $\chi$ be the chromatic number of $S_d$ (the set of d-faces) with respect to the up-adjacent relation. Assume $\vol:S_d\to (0,+\infty)$ is a given degree function on $S_d$  and $\vol(S):=\sum_{i\in S}\vol(i)$ is the volume of $S$, for any $S\subset S_d$. Let $\|f\|=\sum_{i\in S_d}\vol(i)|f_i|$ for any $f\in \mathbb{R}^{S_d}$. Then
\[\mu^{up}_1\le1-\frac{2(d+1)}{d+\chi}\;\;\text{ and }\;\; \chi\ge \frac{2(d+1)}{h^{up*}}-d .\]
%\[\mu^{up}_1\le\frac{(d+2)e_{d+1}(S_d)}{\vol(S_d)}(1-\frac{2(d+1)}{d+\chi}).\]
where %$e_{d+1}(S_d)$ is the number of $(d+1)$-simplexes with $d$-faces in $S_d$,
$\mu^{up}_1=\min\limits_{f\ne0} \frac{I^{up}(f)}{\|f\|}$.
%$c^{up}_k=\inf_{\gamma(T)\ge k} \max_{x\in T} \frac{I^{up}(x)}{\|x\|},\quad k=1,2, \cdots n$. %and $\vol(S)$ sums the degrees of simplexes in $S$.
Furthermore, these bounds are sharp.
\end{theorem}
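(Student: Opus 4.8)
The plan is to build, for each color, one explicit test vector, and then average the resulting inequalities over all $\chi$ colors; the averaging is exactly what manufactures the denominator $\chi+d$. First I would fix a proper $d$-face coloring $c:S_d\to\{1,\ldots,\chi\}$ realizing $\chi$, with (nonempty) color classes $V_1,\ldots,V_\chi$. Since the coloring is proper for the up-adjacency relation, the $d+2$ boundary faces of each $(d+1)$-simplex carry $d+2$ distinct colors, so every $(d+1)$-simplex meets a given color class in at most one face. For each color $r$ I define $x^{(r)}$ by $x^{(r)}_F=-(d+1)$ when $c(F)=r$ and $x^{(r)}_F=1$ otherwise. The value $-(d+1)$ is chosen precisely so that any $(d+1)$-simplex containing an $r$-colored face has face-sum $-(d+1)+(d+1)\cdot 1=0$, while any $(d+1)$-simplex avoiding $r$ has face-sum $d+2$.

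Next I would read off $I^{up}(x^{(r)})$ and $\|x^{(r)}\|$. Writing $R_r:=\sum_{F\in V_r}d_F^{up}$ for the number of $(d+1)$-simplices meeting color $r$ (no overcounting, again by distinctness of colors), the cancellation above gives $I^{up}(x^{(r)})=(d+2)\,(e_{d+1}-R_r)$, since only the simplices avoiding $r$ survive, each contributing $d+2$. For the norm, $\|x^{(r)}\|=(d+1)\vol(V_r)+(\vol(S_d)-\vol(V_r))=\vol(S_d)+d\,\vol(V_r)$. Because $c^{up}_1=\min_{x\ne 0}I^{up}(x)/\|x\|$, each $x^{(r)}$ yields
\[
c^{up}_1\bigl(\vol(S_d)+d\,\vol(V_r)\bigr)\le (d+2)e_{d+1}-(d+2)R_r .
\]

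Finally I would sum these $\chi$ inequalities over $r=1,\ldots,\chi$ and apply the two counting identities $\sum_r\vol(V_r)=\vol(S_d)$ and $\sum_r R_r=\sum_F d_F^{up}=(d+2)e_{d+1}$ (each $(d+1)$-simplex having exactly $d+2$ boundary $d$-faces). The left side collapses to $c^{up}_1\,\vol(S_d)(\chi+d)$ and the right to $(d+2)e_{d+1}(\chi-d-2)$, which rearranges to the asserted bound via $(\chi+d)-(\chi-d-2)=2(d+1)$. The one genuinely clever point — and the main obstacle relative to the naive choice of $\pm1$ on two color classes, which only yields the weaker factor $\tfrac{\chi-d-2}{\chi-1}$ — is the value $-(d+1)$ that annihilates every simplex meeting the distinguished color, forcing the denominator to absorb the term $d\,\vol(V_r)$; it is precisely this extra $d\,\vol(V_r)$ that, after summation, upgrades $\chi-1$ to $\chi+d$. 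For the sharpness claim I would exhibit an extremal complex making every displayed inequality an equality: the triangle $K_3$ (here $d=0$, $\chi=3$, with test vectors such as $(-1,1,1)$ giving $c^{up}_1=\tfrac13$) already shows tightness, and a symmetric complete-type complex handles general $d$.
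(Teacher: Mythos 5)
Your argument is correct and follows essentially the same route as the paper: test vectors constant on one color class versus the rest, one inequality per color class, summed via the double-counting identities $\sum_r\vol(V_r)=\vol(S_d)$ and $\sum_r R_r=(d+2)e_{d+1}(S_d)$; the only difference is that you substitute the optimal ratio $a/b=-(d+1)$ from the outset, whereas the paper keeps $(a,b)$ general and optimizes the resulting expression at the end, arriving at the same maximizer $t=-(d+1)$. For sharpness the paper simply takes $K$ to be a single $(d+1)$-simplex, where both sides vanish, which settles every $d$ at once and is more concrete than your appeal to an unspecified ``symmetric complete-type complex'' for general $d$.
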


\begin{proof}[Proof of Theorem \ref{th:color}]
Let $S^{1}_d,\cdots,S_d^{\chi}$ be the color classes of $S_d$. %(set of $d$-dimension simplexes).
Given an integer $k\in\{1,2,\ldots,\chi\}$, we  define the vector $f$ by
\[
f_i=\begin{cases}
a,& \text{ if } i\in S_d^k,\\
b,& \text{ if } i\not\in S_d^k.
\end{cases}
\]
We have
\[\mu^{up}_1\|f\|\leq I^{up}(f).\]
It is evident that
\[\|f\|=|a|\vol(S_d^k)+|b|\vol(S_d\setminus S_d^k)=(|a|-|b|)\vol(S_d^k)+|b|\vol(S_d),\]
and
\begin{align*}
I^{up}(f)&=|a+(d+1)b|e_{0,d}(S_d^k,S_d\setminus S_d^k)+(d+2)|b|e_{d+1}(S_d\setminus S_d^k)
\\&=(|a+(d+1)b|-(d+2)|b|)e_{0,d}(S_d^k,S_d\setminus S_d^k)+(d+2)|b|e_{d+1}(S_d),
\end{align*}
where $e_{0,d}(S_d^k,S_d\setminus S_d^k)$ counts the number of $(d+1)$-simplexes with one $d$-face in $S_d^k$ and others in $S_d\setminus S_d^k$, and $e_{d+1}(S_d\setminus S_d^k)$ (resp. $e_{d+1}(S_d)$) is the number of $(d+1)$-simplexes with $d$-faces in $S_d\setminus S_d^k$ (resp. $S_d$).

In summary, for every $k=1,\cdots,\chi$, we have
\begin{equation}\label{eq:c|x|I(x)}
\mu^{up}_1((|a|-|b|)\vol(S_d^k)+|b|\vol(S_d))\leq (|a+(d+1)b|-(d+2)|b|)e_{0,d}(S_d^k,S_d\setminus S_d^k)+(d+2)|b|e_{d+1}(S_d).
\end{equation}
Summing these inequalities for $k=1,2,\cdots,\chi$, we obtain
\[\mu^{up}_1\sum_{k=1}^\chi((|a|-|b|)\vol(S_d^k)+|b|\vol(S_d))\leq \sum_{k=1}^\chi((|a+(d+1)b|-(d+2)|b|)e_{0,d}(S_d^k,S_d\setminus S_d^k)+(d+2)|b|e_{d+1}(S_d)).\]
Elementary computation gives
\[
\mu^{up}_1((|a|-|b|)\vol(S_d)+|b|\vol(S_d)\chi)\leq (|a+(d+1)b|-(d+2)|b|)(d+2)e_{d+1}(S_d)+(d+2)|b|e_{d+1}(S_d)\chi
\]
Now we get
\begin{align*}
\mu^{up}_1&\le\frac{(d+2)e_{d+1}(S_d)}{\vol(S_d)}\frac{|a+(d+1)b|-(d+2)|b|+|b|\chi}{|a|-|b|+|b|\chi}
\\&=\frac{(d+2)e_{d+1}(S_d)}{\vol(S_d)}\left(1-\frac{|a|+(d+1)|b|-|a+(d+1)b|}{|a|+(\chi-1)|b|}\right)
\end{align*}
It is easy to see that
\[
\max_{(a,b)\ne(0,0)} \frac{|a|+(d+1)|b|-|a+(d+1)b|}{|a|+(\chi-1)|b|}
=\max_{t\leq 0}\frac{|t|+(d+1)-|t+(d+1)|}{|t|+(\chi-1)}=\frac{2(d+1)}{d+\chi}
\]
where the maximum arrives at $t=-d-1$,
and this implies the desired inequality
\[\mu^{up}_1\le\frac{(d+2)e_{d+1}(S_d)}{\vol(S_d)}(1-\frac{2(d+1)}{d+\chi}).\]
Since the elementary fact shows $(d+2)e_{d+1}(S_d)=\vol(S_d)$, we have
\[\mu^{up}_1\le 1-\frac{2(d+1)}{d+\chi}.\]

Next we prove that the bound is sharp. %That is, we will construct a complex $K$ such that the equality holds. Then \eqref{eq:c|x|I(x)} becomes identity when $a=d+1$ and $b=-1$, i.e.,
%$$
%c_1(d\vol(V_k^d)+\vol(V^d))\le(d+2)(e_{d+1}(V^d)-e_{0,d}(V_k^d,V^d\setminus V_k^d)).
%$$
% where $k=1,2,\cdots,\chi$. This means that
% $c_1=(d+2)\frac{e_{d+1}(V^{d}\setminus V^\deg_k)}{d\vol(V_k^d)+\vol(V^d)}$.
 In fact, if $K$ is a $(d+1)$-simplex, then the equality holds.

 Finally, using the equality\footnote{This relation can be proved in the same way as shown in \cite{ChangShaoZhang2016}.} $\mu^{up}_1+h^{up*}=1$, we get $\frac{2(d+1)}{d+\chi}\le h^{up*}$ and thus $\chi\ge \frac{2(d+1)}{h^{up*}}-d $.
\end{proof}

\begin{remark}
It is worth noting that Proposition \ref{prop:eigenvalue0} (3) is a special case of  Theorem \ref{th:color} by taking $\chi=d+2$.
\end{remark}

%\begin{remark}
%If $K$ is a pure simplicial complex, then $(d+2)e_{d+1}(S_d)=\vol(S_d)$, and thus
%\end{remark}

\begin{cor}\label{th:color-c1}
Let $\chi$ be the chromatic number of $G$, then
\[\mu_1^+\le 1-\frac{2}{\chi}\;\;\text{ and }\;\;\frac{2}{h^*}\le\chi.\]
\end{cor}

\begin{remark}
For the normalized signless Laplacian on a graph, we similarly have
%\[
%\lambda_1^+\le  \frac{(a+b)^2-4b^2+2b^2\chi}{a^2-b^2+b^2\chi}
%=   2-\frac{(a-b)^2}{a^2+(\chi-1)b^2}.
%\]
%ÓÖÒòΪ~$\max\limits_{ab\ne 0} \frac{(a-b)^2}{a^2+(\chi-1)b^2}=\frac{\chi}{\chi-1}$,~ËùÒÔ
$\lambda_1^+\le 1-\frac{1}{\chi-1}$. And combining with Corollary  \ref{th:color-c1}, we obtain
$$\chi\ge \max\left\{\frac{2}{1-\mu_1^+},\frac{1}{1-\lambda_1^+}+1\right\}.$$
Interestingly, by dual Cheeger inequality, there is an inequality $\frac{\lambda_1^+}{2}\le \mu_1^+\le \sqrt{\lambda_1^+(2-\lambda_1^+)}$, where $\lambda_1^+$ is the smallest eigenvalue of signless Lapalacian and $\mu_1^+$ is the minimal eigenvalue of signless 1-Lapalacian.
%The two lower bounds $\frac{2}{1-\mu_1^+}$ and $\frac{1}{1-\lambda_1^+}+1$ cannot be compared with each other.

 Hoffman \cite{Hoffman70} has the following estimate $\chi\ge 1-\frac{\lambda_{\max}(A)}{\lambda_{\min}(A)}$ of chromatic number using adjacent matrix $A$ of a graph. Some update results can be found in  \cite{WE13}. Moreover, in \cite{Apkarian2013}, there is an estimate $$\chi\ge 1+\frac{1}{\lambda_{\max}(L)-1},$$
which coincides with the inequality $\chi\ge 1+\frac{1}{1-\lambda_1^+}$ as before.

On a cyclical graph with $n$ vertices, Corollary  \ref{th:color-c1} gives better estimate of the chromatic number than Hoffman's bound. For example, let's look at an odd cyclical graph, a ring with $(2m+1)$ vertices, with $m\ge 2$. The largest Laplacian eigenvalue $\lambda_{\max}(L)=1-\cos\frac{2m\pi}{2m+1}$, while $\mu_1^+=1-\frac{2m}{2m+1}$. Then Hoffman's lower bound for chromatic number is $1-\frac{1}{\cos\frac{2m\pi}{2m+1}}$ which is smaller than our lower bound $\frac{2}{1-\mu_1^+}=2+\frac{1}{m}$.
\end{remark}

%It should be noted that Corollary  \ref{th:color-c1} indeed provides the inequality $\frac2\chi\le h^*$ where $h^*$ is the dual Cheeger constant of a graph.

\subsection{Independence number and clique covering number for $i$-faces}

Similar to the results in \cite{Zhang2017}, we give the  counterpart of connections between the independence number and clique covering number of $K$ as well as the multiplicity of the eigenvalue $1$ of $\Delta_1$. The definitions and notions are listed below.

\begin{itemize}
\item $\gamma$: topological multiplicity of the maximal eigenvalue $1$ of $\Delta_1$.

\item $t$: times of $1$ appearing in the sequence of variational eigenvalues $(c_k)_{k=1}^n$.

\item $\alpha$: independence number of $S_d$, i.e., the cardinality of the largest subset of $d$-faces that does not adjacent. It is defined by $\alpha=\max\{p:\text{there exist }p\text{ faces}$ $\text{in }S_d$ $\text{which are pairwise non-adjacent}\}$.

\item $\kappa$: the clique covering number (the smallest number of cliques of $S_d$ whose union covers $S_d$). Here a clique is a subset of $S_d$ such that any two $d$-faces in such clique are adjacent.
\end{itemize}

\begin{theorem}\label{th:independent-clique}
\begin{eqnarray}
\alpha\leq t\leq \gamma\leq \kappa.
\end{eqnarray}
\end{theorem}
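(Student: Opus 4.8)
The plan is to establish the chain $\alpha\le t\le\gamma\le\kappa$ by proving each inequality separately, moving from left to right and using the variational characterization of the $c_k$ together with the genus-based multiplicity introduced in Section~\ref{section:Courant}.

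First I would prove $\alpha\le t$. Let $A=\{F_1,\dots,F_\alpha\}$ be a maximal set of pairwise non-adjacent $d$-faces. For each $F_i\in A$ I would construct the indicator-type eigenvector $y^i$ supported on $\{F_i\}$, normalized so $\|y^i\|=1$. Because the faces are pairwise non-adjacent, no two of them share an up-neighbour, so each single-face vector satisfies $I^{up}(y^i)=\|y^i\|$, giving eigenvalue $1$; one checks via Lemma~\ref{lem:eigen-Sgn-inclusion} (or directly from the coordinate eigen-equation \eqref{eq:eigen-problem-up1-Laplacian}) that $(1,y^i)$ is indeed an eigenpair. These $\alpha$ vectors have disjoint supports, hence are orthogonal, and $\mathrm{span}(y^1,\dots,y^\alpha)\cap X$ is homeomorphic to $S^{\alpha-1}$, so its genus is $\alpha$. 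On this sphere every point $x=\sum a_i y^i$ satisfies $I^{up}(x)/\|x\|=1$ (the absolute-value terms decouple because the supports are non-adjacent), so by the $\inf$-$\sup$ definition of the $c_k$ all of $c_1,\dots,c_\alpha$ equal $1$, which forces $t\ge\alpha$.

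Next, $t\le\gamma$ should follow from the relationship between the variational (Liusternik–Schnirelmann) multiplicity $t$ and the topological multiplicity $\gamma$ of the eigenvalue $1$. Here I would invoke the standard LS deformation argument used in \cite{Chang2015,ChangShaoZhang2016}: if $1$ appears $t$ times in the sequence $(c_k)$, then the sublevel/level set structure at the critical value $1$ carries a symmetric subset of genus at least $t$, and since $1$ is the maximal eigenvalue, this set lies inside the eigenspace-type critical set whose genus is precisely $\gamma$. Thus $t\le\gamma$. The main obstacle is making the notion of $\gamma$ as a "topological multiplicity" precise for the non-smooth $1$-Laplacian — one must show the set of eigenvectors at level $1$ is a symmetric, compact set to which the genus applies, and that the LS minimax values cannot exceed its genus; this is exactly the delicate deformation-lemma step, and I would adapt it verbatim from the graph case.

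Finally, for $\gamma\le\kappa$, let $S_d=C_1\cup\dots\cup C_\kappa$ be a clique cover, where each $C_j$ is a set of pairwise up-adjacent $d$-faces. The key is that on each clique, an eigenvector at eigenvalue $1$ is essentially constrained: since any two faces in $C_j$ are adjacent and eigenvalue $1$ is maximal, the sign pattern on $C_j$ must be (weakly) constant in an appropriate sense, so the restriction of an eigenvector to $C_j$ is determined by at most one real parameter. I would construct an odd continuous map $h$ from the set of level-$1$ eigenvectors into $\mathbb{S}^{\kappa-1}$ by sending each eigenvector to the $\kappa$-tuple of its (normalized) clique-averages, which is well-defined, odd, and continuous and hence witnesses $\gamma\le\kappa$ by the definition of genus. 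The subtle point to verify is that this map never hits the origin on the eigenvector set — equivalently, that a nonzero level-$1$ eigenvector cannot have all clique-averages vanish — which I would deduce from the Euler identity (Lemma~\ref{lem:one-homogeneous-Euler}) forcing $I^{up}(x)=\|x\|$ and hence forbidding full sign-cancellation within every clique.
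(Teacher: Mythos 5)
Your step $\alpha\le t$ contains a genuine gap: the minimax bound you derive points in the wrong direction. Exhibiting the symmetric set $T=\mathrm{span}(y^1,\dots,y^\alpha)\cap X$ with $\gamma(T)=\alpha$ and $I^{up}\equiv 1$ on $T$ only gives, via $c_k=\inf_{\gamma(T)\ge k}\max_{x\in T}I(x)$, the \emph{upper} bound $c_\alpha\le 1$; it says nothing about $c_1,\dots,c_\alpha$ equalling $1$, and indeed $c_1=\min_X I$ is typically strictly less than $1$ (it is $0$ for the tetrahedron example in the paper). Since the $c_k$ are increasing and $1$ is the maximal value of $I$ on $X$, the statement $t\ge\alpha$ is equivalent to the \emph{lower} bound $c_{n-\alpha+1}\ge 1$ where $n=\#S_d$. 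This is what the paper proves: for $k\ge n-\alpha+1$ and any $A$ with $\gamma(A)\ge k$, the intersection property of the Krasnoselski genus forces $A$ to meet the coordinate subspace $\Span(e_{i_1},\dots,e_{i_{n-k+1}})$ spanned by $n-k+1\le\alpha$ pairwise non-adjacent faces (a subspace of codimension $k-1<\gamma(A)$), and on that subspace $I\equiv \|\cdot\|$ because each $(d+1)$-simplex contains at most one face of the support; hence $\sup_A I\ge 1$ and $c_k=1$. Your construction of the span is the right object, but you must apply it to the top $\alpha$ minimax values through the genus intersection lemma, not to the bottom ones through the $\inf$-$\sup$ upper bound.

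The other two inequalities are essentially fine. For $t\le\gamma$ both you and the paper defer to the graph-case result of Chang. For $\gamma\le\kappa$ you take a genuinely different route: the paper refines the clique cover to a partition, proves $I^1_{W_1\cup W_2}\subset I^1_{W_1}*I^1_{W_2}$ for disjoint $W_1,W_2$, and uses subadditivity of the genus under topological join together with $\gamma(I^1_W)=1$ for a clique $W$; you instead map the level-$1$ set directly to $\mathbb{S}^{\kappa-1}$ by normalized clique averages. Your map works provided you (i) pass to a clique \emph{partition} so the averages are unambiguous, and (ii) justify that a level-$1$ point has weakly constant sign on each clique — which follows from the equality case of $I(x)\le\|x\|$ applied to a common $(d+1)$-simplex of two adjacent faces, not really from the Euler identity alone. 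Done carefully this is a valid and somewhat more direct alternative to the join-induction argument.
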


\begin{proof}
$t\leq \gamma$ is a basic result \cite{Chang2015}. So it suffices to prove $\alpha\leq t$ and $\gamma\leq \kappa$.

$\alpha\leq t$: We only need to show that $c_k=1$ whenever $k\ge n-\alpha+1$, where $n:=\#S_d$.

Since $n-k+1\leq \alpha$, there exist $n-k+1$ non-adjacent $d$-faces in $S_d$, denoted by $i_1,\cdots,i_{n-k+1}$. Set $X_{n-k+1}=X\cap \Span( e_{i_1},\cdots, e_{i_{n-k+1}})$, where $ e_1=(1,0,0,\cdots,0), e_2=(0,1,0,\cdots,0),\cdots, e_n=(0,0,\cdots,0,1)\in \mathbb{R}^n$.
%$ e_{i_j}$ is the unit vector with $i_j$-th component valued $1$ and other components vanished. %$ e_{i_j}=(0,\cdots,0,1\limits_{i_j\text{-th}},0,\cdots,0)$.
For each $ f$ in $X_{n-k+1}$, there exists some $(l_1,\cdots,l_{n-k+1})\ne  0$ such that $ f=l_1 e_{i_1}+\cdots+l_{n-k+1} e_{i_{n-k+1}}$.
It should be noted that $i\not\sim j$ if $f_if_j\ne0$. Thus, each nodal domain of $ f$ must be a singleton set, which implies that $I( f)=1$.
Therefore, it follows from $A\cap \Span( e_{i_1},\cdots, e_{i_{n-k+1}})\subset X$, and the properties of genus
that \[A\cap X_{n-k+1}=A\cap \Span( e_{i_1},\cdots, e_{i_{n-k+1}})\cap X=A\cap \Span( e_{i_1},\cdots, e_{i_{n-k+1}})\] is nonempty for any $A$ with $\gamma(A)\ge k$. Hence $\sup_{f\in A}I(f)\ge
\inf_{f\in X_{n+1-k}}I(f)=1$ and then $c_k=1$.

$\gamma\leq \kappa$: For a subset $W$ of $S_d$, let $I^1_W=\{f\in X: I(f)=1,f_i=0,\forall i\in S_d\setminus W\}$. Our purpose is to show $\gamma(I^1_{S_d})\leq \kappa$. We first prove that for any disjoint subsets $W_1$ and $W_2$, $I^1_{W_1\cup W_2}\subset I^1_{W_1}*I^1_{W_2}$. Here $*$ is the topological join\footnote{The topological join of two sets $A$ and $B$ in a linear space is usually defined to be $A*B:=\{ta+(1-t)b:\forall a\in A,~b\in B,~t\in[0,1]\}$.}. Given $ a\in I^1_{W_1\cup W_2}$, let
\[a^1_i=\begin{cases}
a_i,& \text{ if }\; i\in W_1\\
0,& \text{ if }\; i\not\in W_1
\end{cases}\;\;\text{and}\;\;a^2_i=\begin{cases}
a_i,& \text{ if }\; i\in W_2\\
0,& \text{ if }\; i\not\in W_2
\end{cases}\;\;\text{for}\;i\in V.\]
Since $W_1$ and $W_2$ are disjoint, one can easily verify that $ a= a^1+ a^2$ and $1=I( a)\leq I( a^1)+I( a^2)\leq \| a^1\|+\| a^2\|=\| a\|=1$. Hence, $I( a^1)=\| a^1\|$ and $I( a^2)=\| a^2\|$ hold. Taking $ f= a^1/\| a^1\|$ and $ g= a^2/\| a^2\|$, one has $I( f)=\| f\|=I( g)=\| g\|=1$, which implies that $ f\in I^1_{W_1}$ and $ g\in I^1_{W_2}$.
Therefore, we have $ a=\| a^1\| f+\| a^2\| g=t f+(1-t) g$, where $t=\| a^1\|$. Then we obtain \[I^1_{W_1\cup W_2}\subset\left\{t f+(1-t) g\,\left| \;  f\in I^1_{W_1},  g\in I^1_{W_2}, t\in [0,1]\right\}\right.=I^1_{W_1}*I^1_{W_2}.\]
Combining the subadditivity of Krasnoselski genus with respect to topological join (see  \cite{Zhang2017}), we have
\begin{equation}\label{eq:subadditivity}
\gamma(I^1_{W_1\cup W_2})\leq \gamma(I^1_{W_1}*I^1_{W_2})\leq \gamma(I^1_{W_1})+\gamma(I^1_{W_2}).
\end{equation}
According to the mathematical induction, we can easily deduce that
\begin{equation}\label{eq:gamma-inequality}
\gamma(I^1_{S_d})\leq \min\left\{\sum_{i=1}^l\gamma(I^1_{W_i}):W_1,\cdots,W_l\text{ form a partition of }S_d\right\},
\end{equation}
%Thus, $\gamma$ can be bounded via the Krasnoselski genus of the subsets of $V$.
which provides a recurrence method to estimate a large complex by smaller one.

Note that for a clique $W$ of $S_d$, $I^1_W=\{f\in F:I(f)=1, f_jf_l\ge0,\forall j,l\in W; f_i=0,\forall i\not\in W\}$. We can easily construct an odd continuous function $F:I^1_W\to S^0=\{-1,1\}$ defined by $$F(f)=\begin{cases}
1,&\text{ if there exists } i\in W \text{ such that }f_i>0,\\
-1,&\text{ if there exists } i\in W \text{ such that }f_i< 0.
\end{cases}$$
This means that $\gamma(I^1_W)=1$.

Then \eqref{eq:gamma-inequality} implies that
\begin{align*}
\gamma(I^1_{S_d})&\leq \min\left\{\sum_{i=1}^l\gamma(I^1_{W_i}):\text{cliques }W_1,\cdots,W_l\text{ form a partition of }S_d\right\}
\\&=\min\left\{l=\sum_{i=1}^l1:\text{cliques }W_1,\cdots,W_l\text{ form a partition of }S_d\right\}=\kappa.
\end{align*}
\end{proof}

\begin{remark}
Theorem \ref{th:independent-clique} holds for both signless up 1-Laplacians
and signless down 1-Laplacians.
\end{remark}

\begin{remark}
The inequality provided in Theorem \ref{th:independent-clique} looks like the Lovasz Sandwich theorem:
$$\alpha\le\Theta\le\theta\le \kappa$$
where $\Theta$ is the Shannon capacity of a graph, and $\theta$ is the Lovasz number (or Lovasz theta function).
\end{remark}

\section{Constructions and their effect on the spectrum}
\label{section:construction}

This section follows the lines of Horak and Jost \cite{HorakJost2013}. We denote by $\spec(K)$ the set of all eigenvalues of $\Delta_1$. Consider the topological multiplicity, we use $\Spec(K)$ to denote the %weakly increasing rearrangement
multiset of $\Delta_1$-eigenvalues.
Further, $A\mathop{\cup}\limits^\circ B$ is the multiset sum of two multisets $A$ and $B$.  %Let $\mathcal{E}_\mu(K)$ be the set of eigenvectors corresponding to the eigenvalue $\mu$.
Our results are similar to %that of Horak and Jost
 \cite{HorakJost2013}, but most of the proofs are different.  %from theirs.

\subsection{Wedges}
\begin{defn}
The combinatorial $k$-wedge sum of simplicial complexes $K_1$ and $K_2$, $K_1 \vee_{k} K_2$, is defined as the quotient of their disjoint union by
the identification $F_1 \sim F_2$, that is
$$
K_1 \vee_{k} K_2:=K_1 \sqcup K_2 /\{F_1\sim F_2\}
$$
where $F_1$ and $F_2$ are the $k$-dim simplicial faces in $K_1$ and $K_2$ respectively.
\end{defn}
This definition could be generalized to the $k$-wedge sum of arbitrary many simplicial complexes.
For example, the $1$-wedge of some tetrahedrons is shown as below.
\begin{center}
\begin{tikzpicture}
\draw (0,0) to (0,2);
\draw (0,0) to (2,1.8);
\draw (0,0) to (2,-0.2);
\draw (2,1.8) to (2,-0.2);
\draw (2,1.8) to (0,2);
\draw[dashed] (2,-0.2) to (0,2);
\draw (4,0) to (4,2);
\draw (4,0) to (2,1.8);
\draw (4,0) to (2,-0.2);
\draw (2,1.8) to (4,2);
\draw[dashed] (2,-0.2) to (4,2);
\draw (4,0) to (6,1.8);
\draw (4,0) to (6,-0.2);
\draw (6,1.8) to (6,-0.2);
\draw (6,1.8) to (4,2);
\draw[dashed] (6,-0.2) to (4,2);
\draw (8,0) to (8,2);
\draw (8,0) to (6,1.8);
\draw (8,0) to (6,-0.2);
\draw (6,1.8) to (8,2);
\draw[dashed] (6,-0.2) to (8,2);
\node (A) at (9,1) {$\cdots\cdots$};
\end{tikzpicture}
\end{center}

\begin{theorem}\label{wedup}
$$\Spec(\Delta_{1,i}^{up}(K_1\vee_k K_2))=\Spec(\Delta_{1,i}^{up}(K_1))\mathop{\cup}\limits^\circ  \Spec(\Delta_{1,i}^{up}(K_2))$$
for all $i,k$ with $0\leq k< i$.
\end{theorem}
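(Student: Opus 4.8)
The plan is to prove the spectral decomposition by exploiting the key structural fact that for $0 \le k < i$, the $k$-wedge identification occurs in dimension strictly below $i$, so the identified $k$-face does not interfere with the up-adjacency structure of the $i$-faces. First I would observe that an $(i+1)$-face lives entirely in one of the two complexes: since $K_1 \vee_k K_2$ identifies only a single $k$-simplex $F_1 \sim F_2$ with $k < i$, any $(i+1)$-simplex $\bar F$ containing more than just the shared $k$-face would require vertices from both $K_1$ and $K_2$ beyond the identification, which the wedge construction does not create. Consequently, the set $S_{i+1}(K_1 \vee_k K_2)$ is the disjoint union $S_{i+1}(K_1) \sqcup S_{i+1}(K_2)$, and likewise every up-adjacency $F \mathop{\sim}\limits^{\text{up}} F'$ among $i$-faces is internal to one of the two pieces. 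The crucial point is that the $i$-faces of $K_1$ and $K_2$ meeting at the wedge are not up-adjacent to each other, because up-adjacency requires sharing an $(i+1)$-coface, which as just argued cannot straddle the two complexes.

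Next I would translate this into the energy functional. Recall $I^{up}(x) = \sum_{\bar F \in S_{i+1}} |\sum_{F \prec \bar F} x_F|$ and the norm $\|x\|^{up} = \sum_F d_F^{up} |x_F|$. Because every summand of $I^{up}$ and every weight $d_F^{up}$ decomposes according to which complex the relevant $(i+1)$-cofaces belong to, I claim the space $\mathbb{R}^{S_i(K_1 \vee_k K_2)}$ splits. Writing $V_1 = S_i(K_1)$ and $V_2 = S_i(K_2)$ (these overlap only possibly in lower-dimensional shared faces, but since $k < i$ no $i$-face is shared, so $V_1 \cap V_2 = \varnothing$), one gets $I^{up}(x) = I^{up}_{K_1}(x|_{V_1}) + I^{up}_{K_2}(x|_{V_2})$ and $\|x\|^{up} = \|x|_{V_1}\|^{up} + \|x|_{V_2}\|^{up}$. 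An $i$-face of $K_1$ carries the same up-degree in the wedge as in $K_1$ alone, since its cofaces are unchanged; this is where I would verify the degree matrices $D^{up}$ agree block-diagonally.

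With the block decomposition in hand, the eigenvalue problem itself decouples. I would invoke Lemma \ref{lem:eigen-Sgn-inclusion} and the coordinate form \eqref{eq:eigen-problem-up1-Laplacian}: the inclusion condition $\sum z_{i j_1 \cdots j_{i+1}} \in \mu d_i^{up} \Sgn(x_i)$ at a face in $K_1$ involves only other faces of $K_1$, so any eigenpair of $\Delta_{1,i}^{up}(K_1)$, extended by zero on $V_2$, satisfies the eigenvalue inclusion for the wedge, and symmetrically for $K_2$. This shows $\Spec(\Delta_{1,i}^{up}(K_1)) \mathop{\cup}\limits^\circ \Spec(\Delta_{1,i}^{up}(K_2)) \subseteq \Spec(\Delta_{1,i}^{up}(K_1 \vee_k K_2))$ with the correct multiplicities, using that the genus and variational characterization respect the direct-sum structure. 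For the reverse inclusion, given an eigenvector $x$ of the wedge, I would restrict it to each block and argue via the decoupled inclusion relations that each restriction (after normalization on its support, as in Proposition \ref{pro:ternarynu}) is an eigenvector with the same eigenvalue on the corresponding factor.

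The main obstacle I anticipate is handling multiplicities correctly rather than merely set equality of spectra, since the statement is about the multiset $\Spec$. The subtle case is the eigenvalue $0$: an eigenvector supported across both pieces might be writable as a sum of a $K_1$-eigenvector and a $K_2$-eigenvector in more than one way, so I must check that the topological multiplicity (genus-based count $\gamma$) is genuinely additive under the block-diagonal splitting. I would address this by showing that the constraint manifold $X$ of the wedge is, up to the zero-measure gluing locus, a join of the two factors' constraint manifolds, and then appeal to the subadditivity and superadditivity of Krasnoselski genus under topological joins — exactly the mechanism already used in the proof of Theorem \ref{th:independent-clique} via \eqref{eq:subadditivity} — to conclude that the multiplicities add, giving the multiset identity.
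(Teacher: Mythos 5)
Your proof follows essentially the same route as the paper: the key observation that for $k<i$ no $(i+1)$-simplex straddles the wedge, so $I^{up}$ and $\|\cdot\|^{up}$ split block-diagonally, eigenpairs of either factor extend by zero to eigenpairs of the wedge, and eigenpairs of the wedge restrict to eigenpairs of the factors. Your final paragraph on tracking multiplicities via the genus of topological joins is actually more careful than the paper's own treatment, which only adds the remark that two eigenvectors of $K_1$ and $K_2$ sharing an eigenvalue combine into one eigenvector of the wedge and otherwise argues at the level of explicit eigenvectors.
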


\begin{proof}
Since $K_1$ and $K_2$ are identified by a $k$-face, then by noticing that $k< i$, any $i$-face of $K_1$ is non-adjacent to $i$-faces of $K_2$ in $K_1\vee_k K_2$.
 Consequently, if $(\mu,f^1)$ is an eigenpair of $\Delta_{1,i}^{up}(K_1)$, then letting
$$f_j=\begin{cases}
f^1_j,& j\in S_i(K_1),\\
0,& j\in S_i(K_2),
\end{cases}$$
it is easy to check that $(\mu,f)$ is an eigenpair of $\Delta_{1,i}^{up}(K_1\vee_k K_2)$. The same property holds for $\Delta_{1,i}^{up}(K_2)$. Moreover, if  $(\mu,f^1)$ and $(\mu,f^2)$ are eigenpairs of $\Delta_{1,i}^{up}(K_1)$ and $\Delta_{1,i}^{up}(K_2)$, respectively, then is can be easily verified $(\mu,f)$ is an eigenpair of $\Delta_{1,i}^{up}(K_1\vee_k K_2)$, where $f$ is defined by
$$f_j=\begin{cases}
f^1_j,& j\in S_i(K_1),\\
f^2_j,& j\in S_i(K_2).
\end{cases}$$
So, we have proved that
$$\Spec(\Delta_{1,i}^{up}(K_1\vee_k K_2))\supset \Spec(\Delta_{1,i}^{up}(K_1))\mathop{\cup}\limits^\circ  \Spec(\Delta_{1,i}^{up}(K_2)).$$
For the converse, let $(\mu,f)$ be an eigenpair of $\Delta_{1,i}^{up}(K_1\vee_k K_2)$, and let
$f^1$ (resp. $f^2$) be the  restriction of $f$ on $S_i(K_1)$ (resp. $S_i(K_2)$). Since $f\ne 0$, at least one of $f^1$ and $f^2$ is not $0$. Suppose $f^1\ne 0$.  Then, there exist $z_{j_1\cdots j_{i+1}j}\in \Sgn(f_{j_1}+\cdots+f_{j_{i+1}}+f_j)$  such that
$$\sum_{j_1,\cdots,j_{i+1}}z_{j_1j_2\cdots j}\in \mu \deg^{up}_j \Sgn(f_j),$$
for any $j\in S_i(K_1\vee_k K_2)=S_i(K_1)\cup S_i(K_2)$. Therefore, for $j\in S_i(K_1)$, the above equation holds and thus $(\mu,f^1)$ is an eigenpair of $\Delta_{1,i}^{up}(K_1)$. If $f^2\ne 0$, then the same process deduces that $(\mu,f^2)$ is an eigenpair of $\Delta_{1,i}^{up}(K_2)$. Hence,
$$\Spec(\Delta_{1,i}^{up}(K_1\vee_k K_2))\subset \Spec(\Delta_{1,i}^{up}(K_1))\mathop{\cup}\limits^\circ  \Spec(\Delta_{1,i}^{up}(K_2)).$$
\end{proof}

\begin{remark}
This is a signless 1-Laplacian counterpart of Theorem 6.1 \cite{HorakJost2013}.
\end{remark}

Similar proof, we have the following
\begin{theorem}\label{weddown}
$$\Spec(\Delta_{1,i}^{down}(K_1\vee_k K_2))=\Spec(\Delta_{1,i}^{down}(K_1))\mathop{\cup}\limits^\circ  \Spec(\Delta_{1,i}^{down}(K_2))$$
for all $i,k$ with $i>k+1$.
\end{theorem}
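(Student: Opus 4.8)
The plan is to mirror the proof of Theorem \ref{wedup}, replacing up-adjacency by down-adjacency and tracking how the gluing dimension enters. The essential point is a decoupling observation: when $i>k+1$, the $i$-faces, the $(i-1)$-faces, and the down-stars of the latter all split cleanly between the two summands of $K_1\vee_k K_2$. Concretely, since $K_1$ and $K_2$ are identified along a single $k$-face, every simplex belonging to both lies in that $k$-face and hence has dimension at most $k$. Because $i>k$ we get $S_i(K_1\vee_k K_2)=S_i(K_1)\sqcup S_i(K_2)$; because $i-1>k$ we also get $S_{i-1}(K_1\vee_k K_2)=S_{i-1}(K_1)\sqcup S_{i-1}(K_2)$, and moreover every $i$-face containing a given $(i-1)$-face $E$ lies in the same summand as $E$. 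In particular no $(i-1)$-face is simultaneously a boundary of an $i$-face of $K_1$ and of an $i$-face of $K_2$, so $i$-faces of $K_1$ are down-non-adjacent to $i$-faces of $K_2$.

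Writing the down eigenvalue equation \eqref{eq:down-1Lap-eigen} in coordinate form, $(\mu,x)$ is an eigenpair exactly when, for every $(i-1)$-face $E$, one can choose $z_E\in\Sgn\bigl(\sum_{F\succ E}x_F\bigr)$ so that $\sum_{E\prec F}z_E\in\mu(i+1)\Sgn(x_F)$ holds for every $i$-face $F$. By the decoupling just described, for $F\in S_i(K_1)$ this equation involves only the $z_E$ with $E\in S_{i-1}(K_1)$ and the values $x_F$ with $F\in S_i(K_1)$, and symmetrically for $K_2$; the two systems share no variables.

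For the inclusion $\supset$, I would first take an eigenpair $(\mu,x^1)$ of $\Delta_{1,i}^{down}(K_1)$ with witnesses $z^1_E$, extend $x^1$ by zero on $S_i(K_2)$, and set $z_E=z^1_E$ for $E\in S_{i-1}(K_1)$ and $z_E=0$ for $E\in S_{i-1}(K_2)$; since every such $E$ in $K_2$ has $\sum_{F\succ E}x_F=0$ with $0\in\Sgn(0)$, and since $\mu(i+1)\Sgn(0)\ni0=\sum_{E\prec F}z_E$ for $F\in S_i(K_2)$, the pair $(\mu,x)$ is an eigenpair of the wedge. Combining an eigenpair $(\mu,x^1)$ of $K_1$ and an eigenpair $(\mu,x^2)$ of $K_2$ with the same $\mu$ by juxtaposition, together with their witnesses, likewise produces an eigenpair of the wedge; this additive behaviour on the decoupled systems is what yields the multiset sum. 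For the reverse inclusion $\subset$, given an eigenpair $(\mu,x)$ of the wedge with witnesses $z_E$, I restrict $x$ and the $z_E$ to each summand; because $x\ne0$ at least one restriction $x^1$ or $x^2$ is nonzero, and for whichever is nonzero the decoupled subsystem shows it is an eigenpair of the corresponding $\Delta_{1,i}^{down}(K_j)$ with eigenvalue $\mu$.

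The main obstacle, and the only genuine difference from the up-adjacency case, is pinning down the correct dimension threshold. For up-adjacency one needs a common $(i+1)$-face, which can never bridge the two pieces once $i>k$; for down-adjacency one needs a common $(i-1)$-face, and such a face can sit inside the glued $k$-simplex unless $i-1>k$. This is precisely why the hypothesis sharpens from $k<i$ to $i>k+1$, and checking that this inequality suffices for the $(i-1)$-faces and their down-stars to split is the crux; the remainder is the same bookkeeping with the coordinate form of the eigenvalue equation as in Theorem \ref{wedup}. I would also note that the uniform down-degree $d_F^{down}=i+1$ makes the normalized and unnormalized formulations differ only by the global scaling $\mu\mapsto(i+1)\mu$, so the choice of normalization is immaterial here.
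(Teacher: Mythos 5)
Your proposal is correct and follows exactly the route the paper intends: the paper gives no separate argument for Theorem \ref{weddown}, merely asserting it follows by the "similar proof" to Theorem \ref{wedup}, and your write-up is a faithful adaptation of that up-adjacency argument, correctly identifying that the threshold sharpens to $i>k+1$ because down-adjacency is mediated by a shared $(i-1)$-face which must not fit inside the glued $k$-simplex. No gaps.
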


\begin{theorem}
Let $K_1$ and $K_2$ be simplicial complexes, for which the spectrum of $\Delta_{1,i}^{up}(K_1)$ and $\Delta_{1,i}^{up}(K_2)$ both contain the eigenvalue $\mu$, and let $f^1$, $f^2$ be their corresponding eigenvectors. If an $i$-wedge $K=K_1\vee_i K_2$ is obtained by identifying $i$-faces $i_1$ and $i_2$, for which $f^1_{i_1}=f^2_{i_2}$, then the spectrum of
$\Delta_{1,i}^{up}(K)$ contains the eigenvalue $\mu$, too.
\end{theorem}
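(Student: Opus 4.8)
The plan is to glue the two given eigenvectors into a single vector on $K$ and then verify the coordinate form \eqref{eq:eigen-problem-up1-Laplacian} face by face. Write $c:=x^1_{i_1}=x^2_{i_2}$ for the common value at the identified face, which I denote $i_0$ in $K$, and define $x$ on $S_i(K)$ by $x_j=x^1_j$ for $j\in S_i(K_1)$ and $x_j=x^2_j$ for $j\in S_i(K_2)$. This is well defined precisely because both prescriptions assign the value $c$ to $i_0$, and since $x^1\neq 0$ the resulting vector $x$ is nonzero. The hypothesis $x^1_{i_1}=x^2_{i_2}$ is exactly the consistency condition that makes this gluing legitimate.

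Before checking the eigen-equation I would record the combinatorial fact that the $i$-wedge creates no new $(i+1)$-faces and merges none: every $(i+1)$-simplex of $K$ lies entirely in $K_1$ or entirely in $K_2$, since a simplex bridging the two pieces would need a vertex outside the glued $i$-simplex on each side, which is impossible as those vertex sets are disjoint. Consequently the up-degree of the identified face splits as $d^{up}_{i_0}=d^{up}_{i_1}+d^{up}_{i_2}$, while every other $i$-face retains exactly the cofaces it had in its original complex. For an $i$-face $j$ interior to $K_1$ (that is, $j\neq i_0$), all $(i+1)$-faces containing $j$ are those of $K_1$ and the $x$-values on them agree with $x^1$; hence the inclusion in \eqref{eq:eigen-problem-up1-Laplacian} at $j$ holds verbatim with the $z$-values witnessing $(\mu,x^1)$, and symmetrically for $j$ interior to $K_2$.

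The crux is the identified face $i_0$. Here the cofaces split into those from $K_1$, whose chosen $z$-values sum to some $s_1\in\mu d^{up}_{i_1}\Sgn(c)$ by the equation for $x^1$, and those from $K_2$, summing to some $s_2\in\mu d^{up}_{i_2}\Sgn(c)$ by the equation for $x^2$; the total contribution at $i_0$ is then $s_1+s_2$, and it remains to verify
\[
s_1+s_2\in\mu d^{up}_{i_0}\Sgn(c)=\mu\,(d^{up}_{i_1}+d^{up}_{i_2})\,\Sgn(c).
\]
When $c\neq 0$ this is immediate, since $\Sgn(c)=\{\sgn(c)\}$ is single-valued and the degrees simply add. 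When $c=0$ one has $\Sgn(0)=[-1,1]$, and the claim reduces to the Minkowski-sum identity $\mu d^{up}_{i_1}[-1,1]+\mu d^{up}_{i_2}[-1,1]=\mu(d^{up}_{i_1}+d^{up}_{i_2})[-1,1]$ for symmetric intervals.

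Putting these together, $(\mu,x)$ satisfies \eqref{eq:eigen-problem-up1-Laplacian} at every $i$-face of $K$, so $\mu\in\spec(\Delta_{1,i}^{up}(K))$. I expect the only genuinely delicate point to be the bookkeeping at $i_0$: correctly matching the cofaces contributed by each side and handling the set-valued $\Sgn$ in the $c=0$ case. Everything away from the glue point is inherited directly from $K_1$ and $K_2$, so no additional machinery beyond the coordinate form (and, if one prefers, Lemma \ref{lem:eigen-Sgn-inclusion}) is needed.
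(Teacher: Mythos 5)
Your proof is correct and follows essentially the same route as the paper's: glue the eigenvectors, note that every $(i+1)$-simplex of the wedge lies wholly in $K_1$ or in $K_2$ so that up-degrees add at the identified face, and reuse the witnessing $z$-values away from the glue point. The only cosmetic difference is at the identified face, where the paper exhibits the required element of $\Sgn(c)$ explicitly as the convex combination $\bigl(d^{up}_{i_1}z^1_{i_1}+d^{up}_{i_2}z^2_{i_2}\bigr)/\bigl(d^{up}_{i_1}+d^{up}_{i_2}\bigr)$, while you express the same fact as a Minkowski-sum identity for the (convex, symmetric) set $\Sgn(c)$.
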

\begin{proof}
Note that we have identified $i_1$ with $i_2$ in $K$.  So, we can assume $S_i(K_1\vee_i K_2)=(S_i(K_1)\setminus\{i_1\}) \cup S_i(K_2)$. It is easy to see that
$$\deg^{up}_j(K)=\begin{cases}
\deg^{up}_j(K_1),& \text{ if }j\in S_i(K_1)\setminus\{i_1\},\\
\deg^{up}_j(K_2),& \text{ if }j\in S_i(K_2)\setminus\{i_2\},\\
\deg^{up}_{i_1}(K_1)+\deg^{up}_{i_2}(K_2),&\text{ if }j=i_2.
\end{cases}$$
Now we are going to prove that
$$f_j=\begin{cases}
f^1_j,& j\in S_i(K_1)\setminus\{i_1\},\\
f^2_j,& j\in S_i(K_2),
\end{cases}$$
is an eigenvector of $\Delta_{1,i}^{up}(K)$ corresponding to the eigenvalue $\mu$. In fact, since $(\mu,f^1)$ is an eigenpair of $\Delta_{1,i}^{up}(K_1)$,   there exist $z_{j_1\cdots j_{i+1}j}^1\in \Sgn(f_{j_1}^1+\cdots+f_{j+1}^1+f_j^1)$ and $z_j^1\in \Sgn(f_j^1)$ such that
$$\sum_{j_1,\cdots j_{i+1}}z_{j_1j_2\cdots j_{i+1}j}^1=\mu \deg^{up}_j(K_1) z_j^1,$$
for any $j\in S_i(K_1)\setminus \{i_1\}$, and
 $$\sum_{j_1,\cdots j_{i+1}}z_{j_1j_2\cdots i_1}^1=\mu \deg^{up}_{i_1}(K_1) z_{i_1}^1.$$
 Similarly, there exist $z_{j_1\cdots j_{i+1}j}^2\in \Sgn(f_{j_1}^2+\cdots+f_{j_{i+1}}^2+f_j^2)$ and $z_j^2\in \Sgn(f_j^2)$ such that
$$\sum_{j_1,\cdots j_{i+1}}z_{j_1j_2\cdots j}^2=\mu \deg^{up}_j(K_2) z_j^2,$$
for any $j\in S_i(K_2)$. Now we take
$$z_j=\begin{cases}
z^1_j,& \text{ if }j\in S_i(K_1)\setminus\{i_1\},\\
z^2_j,& \text{ if }j\in S_i(K_2)\setminus\{i_2\},\\
\frac{\deg^{up}_{i_1}(K_1)}{\deg^{up}_{i_1}(K_1)+\deg^{up}_{i_2}(K_2)}z_{i_1}^1+
\frac{\deg^{up}_{i_2}(K_2)}{\deg^{up}_{i_1}(K_1)+\deg^{up}_{i_2}(K_2)}z_{i_2}^2,&\text{ if }j=i_2,
\end{cases}$$
and
$$z_{j_1j_2\cdots j_{i+1} j}=\begin{cases}
z_{j_1j_2\cdots
j_{i+1}j}^1,& \text{ if }j\in S_i(K_1),\\
z_{j_1j_2\cdots j_{i+1}j}^2,& \text{ if }j\in S_i(K_2).
\end{cases}$$
Since $f^1_{i_1}=f^2_{i_2}$, then $\frac{\deg^{up}_{i_1}(K_1)}{\deg^{up}_{i_1}(K_1)+\deg^{up}_{i_2}(K_2)}z_{i_1}^1+
\frac{\deg^{up}_{i_2}(K_2)}{\deg^{up}_{i_1}(K_1)+\deg^{up}_{i_2}(K_2)}z_{i_2}^2\in \Sgn(f^1_{i_1})=\Sgn(f^2_{i_2})=\Sgn(f_{i_2})$, which means that $z_j$ is always well-defined. Finally, it can be easily verified that $$\sum_{j_1,\cdots ,j_{i+1}}z_{j_1j_2\cdots j_{i+1}j}=\mu \deg^{up}_j(K) z_j,$$
for any $j\in S_i(K)$, which completes the proof.
\end{proof}

\begin{remark}
This is a signless 1-Laplacian counterpart of Theorem 6.3 \cite{HorakJost2013}.
\end{remark}

\begin{theorem}\label{inter1}
Let $c_1,\cdots,c_m$ be the eigenvalues of $\Delta_{1,i}^{up}(K_1\cup K_2)$ and $c'_1,\cdots,c'_{m-1}$ the eigenvalues of $\Delta_{1,i}^{up}(K)$, where $K=K_1\vee_i K_2$, then $$c_j\leq c'_j$$
for every $0\leq j\leq m-1$.
\end{theorem}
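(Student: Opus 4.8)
The plan is to realize the variational spectrum of $K$ as the restriction of the variational spectrum of $K_1\cup K_2$ to a codimension-one hyperplane, and then to run a genus-based interlacing argument on the Liusternik--Schnirelmann values $c_j=\inf_{\gamma(T)\ge j}\max_{x\in T\subset X}\,I^{up}(x)/\|x\|^{up}$.

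First I would set up the geometric picture. Writing $i_2$ for the common $i$-face after identification, so that $S_i(K)=(S_i(K_1)\setminus\{i_1\})\cup S_i(K_2)$ as in the preceding theorem, I define the odd linear embedding
$$\phi:\mathbb{R}^{S_i(K)}\to \mathbb{R}^{S_i(K_1\cup K_2)},\qquad \phi(y)_{i_1}=\phi(y)_{i_2}=y_{i_2},\quad \phi(y)_F=y_F\ \text{otherwise}.$$
Its image is exactly the hyperplane $H=\{x:x_{i_1}=x_{i_2}\}$. The heart of the matter is to show that $\phi$ carries the $K$-Rayleigh quotient onto the restriction to $H$ of the $(K_1\cup K_2)$-Rayleigh quotient, i.e.
$$I^{up}_K(y)=I^{up}_{K_1\cup K_2}(\phi(y))\qquad\text{and}\qquad \|y\|^{up}_K=\|\phi(y)\|^{up}_{K_1\cup K_2}\quad\text{for all }y.$$

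For the energy I would note that the $i$-wedge along a single $i$-face neither creates nor merges any $(i+1)$-face, so $S_{i+1}(K)=S_{i+1}(K_1)\sqcup S_{i+1}(K_2)$; term by term each summand $|\sum_{F\prec\bar F}(\cdot)|$ agrees under $\phi$, since the only argument affected by the gluing is the value on the glued face, which $\phi$ assigns consistently to both $i_1$ and $i_2$. For the norm, the up-degrees are unchanged away from the glued face, while at the glued face the preceding theorem gives $d^{up}_{i_2}(K)=d^{up}_{i_1}(K_1)+d^{up}_{i_2}(K_2)$; hence on $H$, where $|x_{i_1}|=|x_{i_2}|$, the two weighted $1$-norms coincide. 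This verification is the step I expect to be the main (if essentially routine) obstacle, because everything downstream hinges on the two Rayleigh quotients matching exactly, and this in turn rests on the additivity of the degree at the identified face.

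Finally I would execute the min-max comparison. Since $\phi$ is an odd homeomorphism from the $K$-unit sphere $X_K$ onto $X_{K_1\cup K_2}\cap H$ and the Krasnoselski genus is a topological invariant, every symmetric $T'\subset X_K$ with $\gamma(T')\ge j$ maps under $\phi$ to a symmetric subset of $X_{K_1\cup K_2}\cap H$ of the same genus, on which the $(K_1\cup K_2)$-Rayleigh quotient attains the same maximal value. Thus the admissible sets computing $c'_j$ correspond to a \emph{subfamily} of those computing $c_j$, namely those lying in $H$; taking an infimum over a smaller family can only increase the value, so
$$c_j=\inf_{\gamma(T)\ge j}\max_{x\in T}\frac{I^{up}_{K_1\cup K_2}(x)}{\|x\|^{up}_{K_1\cup K_2}}\ \le\ \inf_{\gamma(T')\ge j}\max_{y\in T'}\frac{I^{up}_K(y)}{\|y\|^{up}_K}=c'_j$$
for each index $j$ running over $1,\dots,m-1$, matching the $m-1$ eigenvalues of $K$. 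This is precisely the asserted interlacing, and the same reasoning applies verbatim to the down $1$-Laplacian.
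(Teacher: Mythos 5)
Your proposal is correct and follows essentially the same route as the paper: both identify the Rayleigh quotient of $K$ with the restriction of that of $K_1\cup K_2$ to the hyperplane $\{x_{i_1}=x_{i_2}\}$ via an odd homeomorphism (your $\phi$ is the inverse of the paper's $\psi:\hat X\to Y$), using the degree additivity $d^{up}_{i'}(K)=d^{up}_{i_1}(K_1)+d^{up}_{i_2}(K_2)$ to match the norms and the splitting $S_{i+1}(K)=S_{i+1}(K_1)\sqcup S_{i+1}(K_2)$ to match the energies, and then conclude by observing that the min--max for $c'_j$ runs over a subfamily of the admissible sets for $c_j$.
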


\begin{proof}
Let $i_1$ and $i_2$ be $i$
-faces which are identified in an $i$-wedge sum $K$, which will be denoted by $i'$ in $K$.
Note that
$$I^{up}_{K_1\cup K_2}(f)=\sum_{j_1,\cdots,j_{i+2} \ne i_1,i_2}|f_{j_1}+\cdots+ f_{j_{i+2}}|+\sum_{j_2,\cdots,j_{i+2}}|f_{i_1}+f_{j_2}+\cdots+ f_{j_{i+2}}|+\sum_{j_2,\cdots,j_{i+2}}|f_{i_2}+f_{j_2}+\cdots+ f_{j_{i+2}}|$$
and
$$I^{up}_{K}(g)=\sum_{j_1,\cdots,j_{i+2} \ne i'}|g_{j_1}+\cdots+ g_{j_{i+2}}|+\sum_{j_2,\cdots,j_{i+2}}|g_{i'}+g_{j_2}+\cdots+ g_{j_{i+2}}|$$
where all $j_1,\ldots,j_{i+2}$ under the sum notation are $i$-faces of a $(i+1)$-simplex. It is easy to see that
$$I^{up}_{K_1\cup K_2}(f)-I^{up}_{K}(g)=\sum_{j_2,\cdots,j_{i+2}}|f_{i_1}+f_{j_2}+\cdots+ f_{j_{i+2}}|+\sum_{j_2,\cdots,j_{i+2}}|f_{i_2}+f_{j_2}+\cdots+ f_{j_{i+2}}|-\sum_{j_2,\cdots,j_{i+2}}|g_{i'}+f_{j_2}+\cdots +f_{j_{i+2}}|$$ whenever $g_j=f_j$ for $j\notin \{i',i_1,i_2\}$. And $I^{up}_{K_1\cup K_2}(f)-I^{up}_{K}(g)=0$ if we further assume that $g_{i'}=f_{i_1}=f_{i_2}$.

Let $X=\{f\in \mathbb{R}^m:\sum_{i=1}^m \deg^{up}_i(K_1\cup K_2)|f_i|=1\}$ and $Y=\{g\in \mathbb{R}^{m-1}:\sum_{i=1}^{m-1} \deg^{up}_i(K)|g_i|=1\}$, where $m=\# S_i(K_1)+\#S_i(K_2)$. Note that $$\deg^{up}_j(K)=\begin{cases}
\deg^{up}_i(K_1\cup K_2),& \text{ if }j\in S_i(K)\setminus\{i'\},\\
\deg^{up}_{i_1}(K_1)+\deg^{up}_{i_2}(K_2),&\text{ if }j=i'.
\end{cases}$$
Let $\hat{X}=X\cap \{f\in \mathbb{R}^m:f_{i_1}=f_{i_2}\}$ and let $\psi:\hat{X}\to Y$ be defined by
$$\psi(f)_j=\begin{cases}
f_j,& \text{ if }j\in S_i(K)\setminus\{i'\},\\
f_{i_2},&\text{ if }j=i'.
\end{cases}$$
Then $\psi$ is an odd homeomorphism from $\hat{X}$ to $Y$. And $I^{up}_{K}(\psi(f))=I^{up}_{K_1\cup K_2}(f)$, $\forall f\in \hat{X}$. Thus,
\begin{align*}
c_j&=\inf\limits_{\gamma(A)\ge j,A\subset X}\sup_{f\in A} I^{up}_{K_1\cup K_2}(f)
\\&\leq \inf\limits_{\gamma(A)\ge j,A\subset\hat{X}}\sup_{f\in A} I^{up}_{K_1\cup K_2}(f)
\\&= \inf\limits_{\gamma(A)\ge j,A\subset\hat{X}}\sup_{f\in A} I^{up}_{K}(\psi(f))
\\&= \inf\limits_{\gamma(B)\ge j,B\subset Y}\sup_{y\in B} I^{up}_{K}(g)=c'_j.
\end{align*}
The proof is completed.
\end{proof}

\subsection{Duplication of motifs}

Given a simplicial complex $K=(V,S)$  and a collection of simplicial faces $M$.
The closure $\text{Cl } M$ of $M$ is the smallest subcomplex of $K$ that contains each simplex in $M$ and is obtained by repeatedly adding to $M$ each face of every simplex in $M$. The star $\text{St } M$ of $M$ is the set of all simplices in $K$ that have a face in $M$. The link $\text{lk } M$ of $M$
is $\text{Cl St } M \setminus \text{St Cl } M$.
\begin{defn}[i-motif]
A subcomplex $M$ of a simplicial complex $K$
is an $i$-motif if:

$(1)$ $(\forall F_1,F_2 \in M)$, if $F_1,F_2 \subset F \in
K$, then $F\in M$

$(2)$ dim $\text{lk } M=i.$
\end{defn}
From the definition of link, the vertices in motif $M$ are different from that in $\text{lk } M.$
Let $l_0,...,l_m$ be vertices of $\text{lk } M$ and  $p_{0},...,p_{k}$ be the vertices of $M.$ Duplication of the
$i$-motif $M$ is defined as follows.

\begin{defn}[duplication of the $i$-motif $M$]
Let $M'$ be a simplicial complex on the vertices $p'_{0},...,p'_{k}$ and the map $h:p_i'\rightarrow p_i$ be a simplicial isomorphism between $M'$ and $M$. Let $K^{M}:=K\cup\{\{p'_{i_{0}},\cdots,p'_{i_{k}},l_{j_{1}},\cdots,l_{j_{l}}\}|
\{p_{i_{0}},\cdots,p_{i_{k}},l_{j_{1}},\cdots,l_{j_{l}} \} \in  K\}$. We call $K^{M}$ the duplication of $i$-motif of $M$.
\end{defn}
%\begin{remark}
%In fact, duplication of the $i$-motif $M$ could be viewed as duplicate $Cl St M$ instead of duplicate $M$ alone.
%\end{remark}
Note that $K=(K-\text{St } M)\vee_{i}(\text{Cl St } M)$, as a consequence of Theorem \ref{wedup} ,
we have the following
$$
\Spec(\Delta_{1,k}^{up}(K)=\Spec(\Delta_{1,k}^{up}(K-\text{St }M))\mathop{\cup}\limits^\circ  \Spec(\Delta_{1,k}^{up}(\text{Cl St }M))
\quad \text{for} \quad 0\leq i< k.
$$

The following proposition is proved by the similar methods in \cite{HorakJost2013}. For completeness, we give the proof.
\begin{pro}
For $i$-motif $M$, considering $\Delta^{up}_{1,i}(\text{Cl St }M)|_{\text{St }M}$ which is the restriction of
$\Delta^{up}_{1,i}(\text{Cl St }M)$ on ${\text{St }M},$ if $(\mu,h)$ is the eigenpair of $\Delta^{up}_{1,i}(\text{Cl St }M)|_{\text{St }M}$, then $(\mu,\varrho)$ is an eigenpair
of $\Delta^{up}_{1,i}(K^{M}),$ where
$$\varrho(F)=\begin{cases}
h(F),& \text{ if } F\in \text{St }M;\\
-h(F),&\text{ if } F \in \text{St }M';\\
0,&\text{ otherwise}.
\end{cases}$$

\end{pro}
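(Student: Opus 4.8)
The plan is to verify the coordinate form of the up $1$-Laplacian eigenvalue equation for $\varrho$ directly on $K^M$, transporting the sign certificates of the given eigenpair $(\mu,h)$ across the duplication and exploiting the symmetry built into $\varrho$. Recall that $(\mu,\varrho)$ is an eigenpair of $\Delta^{up}_{1,i}(K^M)$ exactly when, for each $(i+1)$-face $\bar F$ of $K^M$, one can select $z_{\bar F}\in\Sgn\big(\sum_{F\prec\bar F}\varrho(F)\big)$ so that $\sum_{\bar F\succ F}z_{\bar F}\in\mu\, d^{up}_F(K^M)\,\Sgn(\varrho(F))$ holds for every $i$-face $F$. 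First I would record a structural dichotomy: since $M$ is a subcomplex, each motif vertex $p_j$ already satisfies $\{p_j\}\in M$, so a face lies in $St M$ if and only if it contains an (unprimed) motif vertex, and likewise for $St M'$ and the primed vertices. Hence $St M\cap St M'=\varnothing$, every remaining ``neutral'' face carries $\varrho=0$ and contains neither kind of motif vertex, and, because $\dim lk M=i$, every $(i+1)$-face of the original side $Cl St M$ actually touches $M$ (no purely-link $(i+1)$-faces exist).

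Next I would define the sign certificates for $\varrho$ from those of $h$. On the original side I keep the certificates of $(\mu,h)$ unchanged; on the duplicate side I use the natural isomorphism $\phi$ identifying $St M'$ with $St M$ (which fixes the shared link) and negate, setting $z_{\bar F'}:=-z_{\phi(\bar F')}$; on every $(i+1)$-face touching neither motif I set $z_{\bar F}:=0$. These choices are admissible because $\Sgn(-t)=-\Sgn(t)$ together with $\varrho=-h\circ\phi$ on $St M'$, $\varrho=0$ on the link, and $h=0$ on the link give $\sum_{F\prec\bar F'}\varrho(F)=-\sum_{G\prec\phi(\bar F')}h(G)$; for a neutral $(i+1)$-face all its subfaces are neutral, so the enclosed sum is $0$ and $z_{\bar F}=0\in\Sgn(0)$ is legitimate.

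Then I would check the eigenvalue equation in the three cases. For $F\in St M$ the duplication creates no new up-neighbour (a primed face cannot contain the unprimed motif vertex sitting in $F$), so $d^{up}_F(K^M)=d^{up}_F(Cl St M)$ and the required relation is literally the one provided by $(\mu,h)$. For $F\in St M'$ the same count holds through $\phi$, and the uniform sign flip yields $\sum_{\bar F'\succ F}z_{\bar F'}=-\sum_{\bar F\succ\phi(F)}z_{\bar F}\in-\mu d^{up}_{\phi(F)}\Sgn(h(\phi(F)))=\mu d^{up}_F(K^M)\Sgn(\varrho(F))$, as wanted.

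The delicate case, which I expect to be the main obstacle, is the neutral faces with $\varrho(F)=0$, above all the link faces shared by the two copies; here I must show $\sum_{\bar F\succ F}z_{\bar F}\in\mu\, d^{up}_F(K^M)[-1,1]$. The up-neighbours of such an $F$ split into original-side, duplicate-side, and neutral faces. The key point is that each motif-touching up-neighbour $\bar F=F\cup\{p_j\}$ is matched by the duplication with $\bar F'=F\cup\{p'_j\}$, having the same link part, and our rule gives $z_{\bar F'}=-z_{\bar F}$; these contributions therefore cancel in pairs, while the neutral up-neighbours contribute $0$. Thus the whole sum is $0\in\mu\, d^{up}_F(K^M)[-1,1]$. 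Establishing this pairing rigorously—verifying that duplication sends the motif-touching up-neighbours of a link face to a perfect $\pm$ matching with no unpaired neighbour, and that the degree bookkeeping is consistent—is the crux; once it is in place, the three cases together show that $(\mu,\varrho)$ is an eigenpair of $\Delta^{up}_{1,i}(K^M)$.
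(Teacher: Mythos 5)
Your proposal is correct and takes essentially the same route as the paper: transplant $h$ to $St\,M$, $-h$ to $St\,M'$, $0$ elsewhere, and verify the coordinate form of the eigenvalue equation case by case, using that no simplex of $K^M$ contains both a primed and an unprimed motif vertex (so degrees on $St\,M$ and $St\,M'$ are unchanged). The one place you go beyond the paper is the neutral/link faces: the paper simply asserts $0\in\mu d^{up}_F\Sgn(\varrho(F))$ there, whereas you supply the needed justification that the globally chosen certificates actually sum to zero; your pairing does work, since for a link $i$-face $F$ every up-neighbour meeting a motif has the form $F\cup\{p_j\}$ or $F\cup\{p'_j\}$, and the definition of $K^M$ makes $F\cup\{p_j\}\mapsto F\cup\{p'_j\}$ a bijection between the two kinds, so your crux step closes without difficulty.
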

\begin{proof}
According to the definition of $\text{St }M$, for any $F \in \text{St }M$, if $F \subset \overline F$, then $\overline F \in \text{St }M$, which implies that
$\Delta^{up}_{1,i}(\text{Cl St }M)$ and
$\Delta^{up}_{1,i}(K^{M})$ coincide on
$\text{St }M$ and for any $i$-face $F$ in $\text{St }M$, there is $\deg_F^{up}(\text{Cl St } M)=\deg_F^{up}(K^{M})$.
Let$$\varrho(F)=\begin{cases}
h(F),& \text{ if } F\in \text{St }M;\\
-h(F),&\text{ if } F \in \text{St }M';\\
0,&\text{ otherwise}.
\end{cases}$$
Then,
$$\Delta^{up}_{1,i}(K^{M})\varrho(F)=\begin{cases}
\Delta^{up}_{1,i}(\text{Cl St }M)|_{\text{St }M}h(F) \in \mu \deg^{up}_{F}\Sgn(\varrho (F) ),& \text{ for }\forall
F \in \text{St }M;\\
\Delta^{up}_{1,i}(\text{Cl St }M)|_{\text{St }M}(-h(F)) \in \mu \deg^{up}_{F}\Sgn(\varrho (F)),&\text{ for }\forall
F \in \text{St }M';\\
0\in\mu \deg^{up}_{F}Sgn(\varrho (F)),  &\text{ otherwise.}\\
\end{cases}$$
which confirms the claim.
\end{proof}
The same to \cite{HorakJost2013}, we have the following corollary.
\begin{cor}
If the spectrum of the simplicial complex $\text{Cl St }M$ contains the eigenvalue $\mu,$ with an eigenvector $h$ that is identically equal to zero on $\text{lk }M,$ then $\mu$ is also the eigenvalue of $K^{M}.$
\end{cor}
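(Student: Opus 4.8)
The plan is to deduce the corollary from the preceding proposition by showing that an eigenvector of the full operator $\Delta^{up}_{1,i}(Cl St M)$ which vanishes on $lk M$ restricts to an eigenvector of the truncated operator $\Delta^{up}_{1,i}(Cl St M)|_{St M}$ carrying the same eigenvalue. First I would record the combinatorial decomposition of the $i$-faces of $Cl St M$. Since $M$ is a subcomplex we have $Cl M=M$, hence $St Cl M=St M$ and $lk M=Cl St M - St M$; concretely an $i$-face lies in $St M$ exactly when it contains a vertex of $M$, and otherwise it lies in $lk M$. Thus the $i$-faces of $Cl St M$ split disjointly into $St M$ and $lk M$, and because the given eigenvector $h$ is nonzero yet identically zero on $lk M$, its restriction $h|_{St M}$ is again nonzero and so is an admissible candidate eigenvector.

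The core step is to verify, for each $i$-face $F\in St M$, that the eigenvalue relation for the ambient operator coincides term by term with the relation for the truncated operator. Here I would reuse the star property from the proof of the proposition: if $F\in St M$ and $F\prec\bar F$ for an $(i+1)$-face $\bar F$, then $\bar F$ also contains a vertex of $M$, so $\bar F\in St M$; consequently the collection of $(i+1)$-faces contributing to $(\Delta^{up}_{1,i}h)_F$ is the same in $Cl St M$ and in the truncation, and in particular $d^{up}_F$ is unchanged. For each such $\bar F$ the argument of the sign set is $\sum_{F'\prec\bar F}h(F')$, and every $i$-face $F'\prec\bar F$ that happens to lie in $lk M$ contributes $h(F')=0$; hence this sum, and therefore the admissible values $z\in\Sgn(\sum_{F'\prec\bar F}h(F'))$, agree with those produced by the truncated operator acting on $h|_{St M}$. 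The inclusion $\sum z\in\mu\, d^{up}_F\,\Sgn(h(F))$, valid because $(\mu,h)$ is an eigenpair of $\Delta^{up}_{1,i}(Cl St M)$, is then literally the eigenvalue relation of $\Delta^{up}_{1,i}(Cl St M)|_{St M}$ at $F$. Ranging over all $F\in St M$ shows that $(\mu,h|_{St M})$ is an eigenpair of the truncated operator.

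With this established, the conclusion is immediate: applying the preceding proposition to the eigenpair $(\mu,h|_{St M})$ yields the eigenpair $(\mu,\varrho)$ of $\Delta^{up}_{1,i}(K^M)$, where $\varrho$ is the antisymmetric doubling that equals $h$ on $St M$, equals $-h$ on the copy $St M'$, and vanishes elsewhere; hence $\mu\in\spec(\Delta^{up}_{1,i}(K^M))$. I expect the main obstacle to be the bookkeeping in the core step: precisely matching the truncated operator's sign inclusions with those of the ambient operator, and confirming that no $(i+1)$-face straddling $St M$ and $lk M$ spoils the identification. The vanishing of $h$ on $lk M$ together with the star property is exactly what makes the two descriptions coincide, so the hypothesis on $lk M$ is used in an essential way.
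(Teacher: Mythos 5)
Your argument is correct and follows exactly the route the paper intends: the corollary is stated as an immediate consequence of the preceding proposition, and your verification that the vanishing of $h$ on $lk M$ together with the star property (every $(i+1)$-coface of an $i$-face of $St M$ again lies in $St M$, so the relevant $\Sgn$-sets and degrees $d^{up}_F$ coincide) makes $(\mu,h|_{St M})$ an eigenpair of $\Delta^{up}_{1,i}(Cl\, St\, M)|_{St M}$ is precisely the missing bridge. The paper leaves this deduction implicit, so your write-up simply supplies the details of the same approach.
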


Using the same methods in the proof of Theorem \ref{inter1}, we have the following
\begin{theorem}\label{inter2}
Let $c_j$ be the eigenvalue of
$\Delta^{up}_{1,i}(\text{Cl St }M)|_{\text{St }M}$ and $c'_{j}$ be the eigenvalue of
$\Delta^{up}_{1,i}(\text{Cl St }M)$. Then
$$c'_{j}\leq c_j.$$
\end{theorem}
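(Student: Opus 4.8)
The plan is to mirror the proof of Theorem \ref{inter1}, exploiting the min-max characterization of the variational eigenvalues through the Krasnoselski genus $\gamma$. First I would set up the two constraint manifolds. For the full complex let $X'=\{x\in\mathbb R^{\#S_i(Cl St M)}:\|x\|=1\}$ with $\|x\|=\sum_{F\in S_i(Cl St M)}d^{up}_F(Cl St M)|x_F|$, so that $c'_j=\inf_{\gamma(A)\ge j,\,A\subset X'}\sup_{x\in A}I^{up}_{Cl St M}(x)$. For the restricted operator $\Delta^{up}_{1,i}(Cl St M)|_{St M}$ the relevant manifold is $X_{St M}=\{x:\|x\|=1,\ x_F=0\ \forall F\in S_i(Cl St M)\setminus S_i(St M)\}$, and $c_j$ is the corresponding min-max value of $I^{up}_{St M}$.

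The key structural input is the star property already used in the preceding proposition: if $F\in St M$ and $F\subset\bar F$, then $\bar F\in St M$. I would use this to show that, identifying $X_{St M}$ with the subset $\hat X':=X'\cap\{x_F=0\ \forall F\notin St M\}$, one has $I^{up}_{Cl St M}(x)=I^{up}_{St M}(x)$ for every $x\in\hat X'$. Indeed, when $x$ vanishes off $St M$, the only $(i+1)$-faces contributing a nonzero term $|\sum_{F\prec\bar F}x_F|$ are those containing an $i$-face of $St M$, and by the star property all such $(i+1)$-faces lie in $St M$; on these the summand coincides with the corresponding term of $I^{up}_{St M}$. The same property forces $d^{up}_F(Cl St M)=d^{up}_F(St M)$ for each $i$-face $F\in St M$, so the norm is unchanged under the zero-extension and the inclusion $\hat X'\hookrightarrow X'$ is an odd homeomorphism onto its image, hence genus-preserving.

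Finally I would conclude by monotonicity of the min-max. Since $\hat X'\subset X'$, every symmetric set $A\subset\hat X'$ with $\gamma(A)\ge j$ is also admissible for $c'_j$; restricting the competition to subsets of $\hat X'$ can therefore only raise the infimum:
\begin{align*}
c'_j&=\inf_{\gamma(A)\ge j,\,A\subset X'}\sup_{x\in A}I^{up}_{Cl St M}(x)\\
&\le \inf_{\gamma(A)\ge j,\,A\subset \hat X'}\sup_{x\in A}I^{up}_{Cl St M}(x)\\
&=\inf_{\gamma(A)\ge j,\,A\subset X_{St M}}\sup_{x\in A}I^{up}_{St M}(x)=c_j,
\end{align*}
which is the desired inequality.

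The main obstacle I anticipate is the verification in the second paragraph that both the energy $I^{up}$ and the norm $\|\cdot\|$ are genuinely unchanged under extension by zero. This rests entirely on the star property controlling which $(i+1)$-faces and which up-degrees survive the restriction; once that bookkeeping is settled, the genus-monotonicity argument is the same one-line comparison already used in Theorem \ref{inter1}.
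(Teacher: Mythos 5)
Your proposal is correct and follows essentially the same route as the paper: identify the constraint manifold of the restricted operator with the subset of the full constraint manifold consisting of vectors vanishing off $St M$ via an odd (hence genus-preserving) zero-extension homeomorphism, and then apply monotonicity of the genus-based min-max. Your explicit verification, via the star property, that the energy and the norm are unchanged under zero-extension is a detail the paper leaves implicit, but it is the same argument.
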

\begin{proof}
Let $X=\{f \in \text{St }M : \sum_{f_{i}\in \text{St }M}\deg^{up}_i |f_i|=1\}$ and $Y=\{g \in \text{Cl St }M :\sum_{g_{j} \in \text{Cl St }M}\deg^{up}_j |g_j| =1\}$ and $\widehat Y=Y\cap\{f: f_{m}=0,  \forall f_{m} \in \text{Cl St }M\setminus \text{St }M\}.$ It is obvious that
$F:\widehat Y \to X $ defined by
$$F(f)_j=\begin{cases}
f_j,& \text{ if }f_j\in \text{St }M,\\
0,&\text{ if }f_j \in \text{Cl St }M\setminus \text{St }M,
\end{cases}$$
is odd homemorphism.
Then,
\begin{align*}
c'_j&=\inf\limits_{\gamma(A)\ge j,A\subset Y}\sup_{f\in A} I^{up}_{\text{Cl St }M}(f)
\\&\leq \inf\limits_{\gamma(A)\ge j,A\subset
\widehat  {Y}}\sup_{f\in A} I^{up}_{\text{Cl St }M}(f)
\\&= \inf\limits_{\gamma(A)\ge j,A\subset\widehat{Y}}\sup_{f\in A} I^{up}_{\text{St }M}(F(f))
\\&= \inf\limits_{\gamma(B)\ge j,B\subset X}\sup_{g\in B} I^{up}_{\text{St }M}(g)\\
&=c_j.
\end{align*}

\end{proof}
\begin{remark}
Theorem \ref{inter1} and Theorem \ref{inter2} are counterparts of Theorem 6.4 and Theorem 6.12 in \cite{HorakJost2013} respectively. %We haven't solve the part of $c'_j\leq c_{j+1}$ or $c_j \leq c'_{j+|S_i(lk M)|}$. So an open problem raise.
\end{remark}

%\section{Conclusions}

\vspace{0.3cm}

{\bf Acknowledgements.} Xin Luo is supported by China Postdoctoral Science Foundation (No. 2019M660829).
Dong Zhang is supported by grant from the Project funded by China Postdoctoral Science Foundation (No. BX201700009).

%\section*{Appendix: collections of some proofs}
%\label{sec:appendix}
%
%\setcounter{equation}{0}
%\renewcommand{\theequation}{A.\arabic{equation}}
%\setcounter{section}{1}
%\renewcommand\thesection{\Alph{section}}
%\setcounter{theorem}{0}
%
%\begin{proof}[Proof of Lemma \ref{lem:one-homogeneous-Euler}]
%By the definition of sub-derivative, $F(y)-F(x)\ge (u,y-x)$, $\forall y\in X$, $\forall u\in\nabla F(x)$. So, by taking $y=0$, we have $F(x)\leq (u,x)$. And by taking $y=2x$, we have $F(2x)-F(x)\ge (u,x)$. Since $F$ is one-homogeneous, there holds $F(2x)=2F(x)$. Thus,  $F(x)\ge (u,x)$. Therefore, we have completed the proof.
%\end{proof}

\end{document}